\documentclass[reqno]{amsart}
\usepackage[utf8]{inputenc}
\usepackage[a4paper]{geometry}
\usepackage{tikz}
\usetikzlibrary{patterns,decorations.markings,arrows.meta} 
\usepackage[english]{babel}
\usepackage[document]{ragged2e}
\usepackage{amsfonts}
\usepackage{amsthm}
\usepackage{amssymb, amsmath}

\usepackage{hyperref}

\usepackage{xcolor}
\usepackage{cleveref}
\usepackage{tikz}
\usetikzlibrary{calc}

\usepackage{enumitem}

\theoremstyle{plain}
\newtheorem{theo}{Theorem}[section]
\newtheorem{theoA}{Theorem}

\newtheorem{prop}[theo]{Proposition}
\newtheorem{lemma}[theo]{Lemma}

\newtheorem{coro}[theo]{Corollary}

\theoremstyle{definition}
\newtheorem{rem}[theo]{Remark}
\newtheorem{example}[theo]{Example}

\newcommand{\C}{\mathbb{C}}
\newcommand{\D}{\mathbb{D}}
\newcommand{\R}{\mathbb{R}}

\title[Localization of the poles of the best M\"obius approximations of $f$]{On the localization of the poles of the best M\"obius approximations of $f$}

\dedicatory{To the memory of Professor Christian Pommerenke}

\date{\today}

\author[H. Arbel\'aez]{Hugo Arbel\'aez}
\address{Hugo Arbel\'aez, Escuela de Matem\'aticas, Universidad Nacional de Colombia, Medell\'{\i}n, Colombia}
\email{hjarbela@unal.edu.co}

\author[M. Chuaqui]{Martin Chuaqui}
\address{Martin Chuaqui, Facultad de Matemáticas, Pontificia Universidad Católica de Chile, Casilla 306, Santiago 22, Chile}
\email{mchuaqui@mat.uc.cl}

\author[R. Hernández]{Rodrigo Hernández}
\address{Rodrigo Hernández, Facultad de Ingeniería y Ciencias, Universidad Adolfo Ibáñez, Av. Padre Hurtado 750, Viña del Mar, Chile}
\email{rodrigo.hernandez@uai.cl}

\author[W. Sierra]{Willy Sierra}
\address{Willy Sierra, Departamento de Matem\'aticas, Universidad del Cauca, Popay\'{a}n, Colombia}
\email{wsierra@unicauca.edu.co}

\keywords{Convex mapping, location of poles, Schwarzian derivative.}

\subjclass[2020]{30C45, 30C80, 30A10, 30J10}

\thanks{The first author was supported by the Universidad Nacional de Colombia. The fourth author thanks the Universidad del Cauca for providing time for this work through research project VRI ID 6235.}

\begin{document}

\maketitle

\begin{abstract}
We study the localization of the poles of the best Möbius approximations for locally univalent functions in the unit disk. Sharp geometric bounds for the pole function are established in terms of Pommerenke’s linear invariant orders, refining classical criteria for convexity and concavity. The behavior of poles is further analyzed for starlike mappings, convex functions of order $\alpha$, Janowski functions, and Robertson’s class. For polygonal mappings, we describe the regions covered by the poles and obtain exact multiplicity results. We also derive new convexity conditions based on bounds of the Schwarzian derivative.

\end{abstract}

\section{Introduction and preliminaries}

This paper is motivated by \cite{Chuaqui2024} where geometric properties of a conformal mapping of the unit disk were found to be determined by the location of the poles of the family of its best Möbius approximations (BMAs). An unexpected result was that a mapping is convex iff all the poles lie outside the disk, while it is concave iff all poles lie inside the disk [\cite{Chuaqui2024}, Theorems 2.2, 2.3]. We will offer here a simple geometric proof. In our study, we investigate further connections between a conformal map and the family of such poles. In Theorems 3.1 and 3.4, the upper and lower order of a function as defined by Pommerenke are shown to give sharp estimates for the location of the poles, while stronger forms of convexity are shown to give refinements of the main results in \cite{Chuaqui2024}. Proposition 4.10, and Theorems 4.6 and 4.8, provide sharp estimates for the location of the poles for starlike mappings, and for the Janowski and Robertson subclasses of conformal mappings. In this same section of the paper, we give a fairly complete description of the regions covered by the poles when mapping the disk onto a general polygon or its complement. The final section is devoted to related results on convexity that bring in the Schwarzian derivative in connection with the values of the second coefficient $a_2$.

\subsection{Best Möbius Approximation (BMA)}

The best Möbius approximation (BMA) to a locally injective analytic function $f(z)$ at a point $\zeta$ is the unique Möbius transformation $M_f(z, \zeta)$ (in $z$) that agrees with $f(z)$ to second order at $\zeta$. Explicitly,
\begin{equation}
M_f(z, \zeta) = f(\zeta) + \frac{(z - \zeta)f'(\zeta)}{1 - \frac{1}{2}(z - \zeta)\dfrac{f''}{f'}(\zeta)}.
\end{equation}

Thus, the pole \( P_f(\zeta) \) of the BMA of \( f \) at \( \zeta \) is given by:
\[
P_f(\zeta) = \zeta + 2 \frac{f'}{f''}(\zeta),
\]
which includes the case \( P_f = \infty \) when \( f''(\zeta) = 0 \). In \cite{Chuaqui2024}, the authors gave a characterization of the localization of the poles of the BMAs as follows:

\begin{theoA} 
Let $f$ be locally injective in a domain $D$, and let $\zeta \in D$. Then the pole $P_f(\zeta)$ of the BMA of $f$ at $\zeta$ satisfies the following:
\begin{enumerate}[wide]
    \item [$(i)$] The modulus $|P_f(\zeta)|$ is bigger than, equal to, or smaller than $|\zeta|$ according to whether \textup{Re}$\{1 + \zeta (f''/f')(\zeta)\}$ is positive, zero, or negative.
    \item [$(ii)$]  The points $0, \, \zeta, \, P_f(\zeta)$ are collinear if and only if \textup{Im}$\{\zeta (f''/f')(\zeta)\} = 0$.
    \item [$(iii)$] $P_f(\zeta) = -\zeta$ if and only if $1 + \zeta (f''/f')(\zeta) = 0$.
\end{enumerate}
\end{theoA}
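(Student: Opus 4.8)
The plan is to reduce all three assertions to a single algebraic identity for $P_f(\zeta)$. Since $f$ is locally injective we have $f'(\zeta) \neq 0$, so the quantity
\begin{equation*}
A = A(\zeta) := 1 + \zeta\,\frac{f''}{f'}(\zeta)
\end{equation*}
is well defined, and $\zeta (f''/f')(\zeta) = A - 1$. Substituting this into $P_f(\zeta) = \zeta + 2 f'(\zeta)/f''(\zeta)$ gives, for $\zeta \neq 0$,
\begin{equation*}
P_f(\zeta) = \zeta\left(1 + \frac{2}{\zeta (f''/f')(\zeta)}\right) = \zeta\,\frac{A+1}{A-1},
\end{equation*}
an identity I would read on the Riemann sphere, so that $P_f(\zeta) = \infty$ corresponds exactly to $A = 1$, i.e.\ $f''(\zeta) = 0$. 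The degenerate case $\zeta = 0$ I would handle separately: there $A(0) = 1$, while $|P_f(0)| > 0 = |\zeta|$ and $P_f(0) \neq 0$ because $f'(0) \neq 0$, so $(i)$–$(iii)$ hold trivially and consistently.

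With this identity the three items become statements about the Möbius involution $T(A) = (A+1)/(A-1)$. For $(i)$ I would use the elementary computation
\begin{equation*}
|T(A)|^{2} - 1 = \frac{|A+1|^{2} - |A-1|^{2}}{|A-1|^{2}} = \frac{4\,\textup{Re}(A)}{|A-1|^{2}},
\end{equation*}
so that $|P_f(\zeta)| = |\zeta|\,|T(A)|$ exceeds, equals, or is less than $|\zeta|$ exactly according to the sign of $\textup{Re}(A) = \textup{Re}\{1 + \zeta (f''/f')(\zeta)\}$; the value $A = 1$ (hence $P_f = \infty$) is consistent, since then $\textup{Re}(A) = 1 > 0$. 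Equivalently, $T$ maps the right half-plane onto the exterior of the unit circle, the imaginary axis onto the unit circle, and the left half-plane onto its interior.

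For $(ii)$, with $\zeta \neq 0$ the points $0, \zeta, P_f(\zeta)$ are collinear in the extended plane if and only if $P_f(\zeta)/\zeta = T(A) \in \R \cup \{\infty\}$; as $T$ has real coefficients and $T^{-1} = T$, this happens if and only if $A \in \R \cup \{\infty\}$, that is $\textup{Im}(A) = \textup{Im}\{\zeta (f''/f')(\zeta)\} = 0$. For $(iii)$, $P_f(\zeta) = -\zeta$ if and only if $T(A) = -1$, and $(A+1)/(A-1) = -1$ forces $A = 0$, i.e.\ $1 + \zeta (f''/f')(\zeta) = 0$.

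I do not anticipate a real obstacle: the core of the argument is the one-line identity $P_f(\zeta) = \zeta (A+1)/(A-1)$ together with standard properties of a linear fractional transformation. The only points needing care are the degenerate configurations $f''(\zeta) = 0$ (where $P_f = \infty$) and $\zeta = 0$, which should be verified to be consistent with — rather than exceptions to — the stated trichotomies; carrying the identity on the Riemann sphere from the start makes this routine.
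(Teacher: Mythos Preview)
Your argument is correct. The identity $P_f(\zeta)=\zeta\,(A+1)/(A-1)$ with $A=1+\zeta f''/f'(\zeta)$ reduces everything to the Möbius involution $T(A)=(A+1)/(A-1)$, and the three items follow at once from the fact that $T$ sends the imaginary axis to the unit circle, preserves $\R\cup\{\infty\}$, and sends $0$ to $-1$. The edge cases $\zeta=0$ and $f''(\zeta)=0$ are handled consistently.

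As for comparison: the paper does not reprove Theorem~A in full---it is quoted from \cite{Chuaqui2024}. What the paper \emph{does} supply is a geometric alternative for the part that underlies $(i)$: for $z$ on a circle $C_r$, the Möbius map $M_f(z,\cdot)$ takes $C_r$ to a circle tangent to $f(C_r)$ at $f(z)$ with the same signed curvature, so by orientation $M_f(z,\D_r)$ is the bounded or unbounded component according to the sign of $\textup{Re}\{1+z f''/f'(z)\}$, locating the pole outside or inside $\overline{\D_r}$. That picture explains $(i)$ conceptually but says nothing about $(ii)$ or $(iii)$. Your algebraic route is more uniform: one identity dispatches all three statements simultaneously, at the cost of the geometric intuition the paper is advertising.
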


Moreover, an analytic proof was given that $f$ is convex if and only if the poles are outside $\D$, while $f$ is concave if and only if the poles lie inside $\D$. We have found the following geometric proof more intuitive. Let $\D_r$ denote the subdisk $|\zeta|<r$,  $C_r$ be the circle $|\zeta|=r<1$ and $z\in C_r$ be fixed. Because of the order of contact at $z$, the circles $M_f(z, C_r)$ and $f(C_r)$ are tangent at $f(z)$ and have the same curvature there, with a sign equal to that of \textup{Re}$\{1 + z(f''/f')(z)\}$. Since conformal mappings preserve orientation, we see that the set $M_f(z, \D_r)$ will correspond to the bounded disk in the complement of $M_f(z, C_r)$ when the curvature is positive, while it will be the unbounded disk in that complement when the curvature is negative. This means that $M_f(z,\zeta)$ has no poles
in $\overline{\D_r}$ for positive curvature, and otherwise for negative curvature. This is depicted in the figure below. In the intermediate stages, the set $M_f(z, C_r)$ must be a straight line. 
\begin{center}
    
\tikzset{
  flowarrows/.style={
    postaction={decorate},
    decoration={markings,
      mark=at position 0.20 with {\arrow[scale=1.15]{Stealth}}, 
      mark=at position 0.82 with {\arrow[scale=1.15]{Stealth}}  
    }
  }
}

\begin{tikzpicture}[line cap=round,line join=round,thick,every node/.style={font=\small}]

\begin{scope}[xshift=0cm]
  \def\R{1.3}       
  \def\rho{4.6}   

  \fill[pattern=north east lines,pattern color=black!70] (0,0) circle (\R);
  \draw[line width=1pt] (0,0) circle (\R);
  \node[right] at (\R*1,-0.25) {$M_f(z,\mathbb{D}_r)$};

  \path (90:\R) coordinate (F);
  \fill (F) circle (2pt);
  \node[above] at (F) {$f(z)$};

  \path (0,{\R-\rho}) coordinate (C);

  \draw[line width=1pt,flowarrows]
    (C) ++(70:\rho) arc[start angle=70, end angle=110, radius=\rho];
\end{scope}

\begin{scope}[xshift=6cm]
  \def\R{1.3}      
  \def\rho{4.6}    
  \def\Boundary{2} 

  \path (270:\R) coordinate (F);
  \fill (F) circle (2pt);
  \node[below] at (F) {$f(z)$};

  \path (0,{-\R+\rho}) coordinate (C);

  \fill[pattern=north east lines, pattern color=black!70, even odd rule]
      (-\Boundary,-\Boundary) rectangle (\Boundary,\Boundary)
      (0,0) circle (\R);

  \draw[line width=1pt] (0,0) circle (\R);
  \node[right] at (\R*1,-0.25) {$M_f(z,\mathbb{D}_r)$};

  \draw[line width=1pt,flowarrows]
    (C) ++(-70:\rho) arc[start angle=-70, end angle=-110, radius=\rho];
\end{scope}

\end{tikzpicture}
\end{center}

In the development of this article, the operator $A_f$ introduced by Pommerenke in \cite{Pomm1964} will play a fundamental role. If $f$ is a locally univalent analytic mapping defined in $\D$, we define
\[A_f(z)=\frac{(1-|z|^2)}{2}\frac{f''}{f'}(z)-\overline{z},\] for all $z\in \D$. From this operator, Pommerenke defined the (upper) order \cite{Pomm1964} and the lower order \cite{Pomm2007} by
\[\alpha_f=\sup_{z\in \mathbb{D}}|A_f(z)| \qquad \text{and} \qquad \mu_f=\inf_{z\in \mathbb{D}}|A_f(z)|.\]
One of the most important properties is that $\alpha_f=1$ if and only if $f$ is convex, and $\mu_f=1$ if and only if $f$ is concave (see \cite{Pomm1964} and \cite{Pomm2007}, respectively).

\section{Properties of $P_f$ function}
A straightforward computation shows the following result.
\begin{prop} \label{Propie}
Let $f$ be a locally univalent analytic mapping defined in $\D$, then
\begin{enumerate}[wide]
    \item [$(i)$] $P_{T \circ f}=P_f$, where $T(z)=az+b$, $a,b\in \C$, $a\neq 0$.
    \item [$(ii)$] If $\sigma_a(z)=(z+a)/(1+\overline{a}z)$, $a\in \D$, then
    \begin{equation}\label{prope 2}
     P_{f\circ \sigma_a}=\sigma_{-a}\circ P_f \circ \sigma_a.   
    \end{equation}
    \item [$(iii)$] Let $f$ be a locally univalent analytic mapping defined in $\D$ and $r\in (0,1)$. If $f_r(z):=f(rz)$, then 
    \begin{equation} \label{prope 3}
       P_{f_r}(z)=\frac1r\cdot P_f(rz), 
    \end{equation}
      for all $z\in \D$.
\end{enumerate}
\end{prop}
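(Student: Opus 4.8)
The plan is to base all three identities on a single structural observation: $P_f(\zeta)$ is, by definition, the pole of the M\"obius transformation $M_f(\cdot,\zeta)$, and both the BMA and its pole transform predictably under pre- and post-composition of $f$ with conformal maps. Throughout, $M_f(\cdot,\zeta)$ and $P_f$ are read as self-maps of $\widehat{\C}$, so that the case $f''=0$, giving $P_f=\infty$, is included with no extra care. For $(i)$, write $g=T\circ f$ with $T(z)=az+b$, $a\neq 0$. Since $T$ is affine, $g'=af'$ and $g''=af''$, so $g''/g'=f''/f'$, and the formula $P_g(\zeta)=\zeta+2(g'/g'')(\zeta)$ gives $P_g=P_f$ immediately. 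Equivalently: $T\circ M_f(\cdot,\zeta)$ is M\"obius and agrees with $g$ to second order at $\zeta$, so it equals $M_g(\cdot,\zeta)$ by uniqueness; as $T$ fixes $\infty$, it has the same pole as $M_f(\cdot,\zeta)$.

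For $(ii)$ and $(iii)$ the key point is that second-order contact is preserved under right-composition with a conformal map: if $h_1$ and $h_2$ agree to second order at a point $p$ and $\phi$ is conformal near $\phi^{-1}(p)$, then $h_1\circ\phi$ and $h_2\circ\phi$ agree to second order at $\phi^{-1}(p)$; this is a one-line chain-rule check. Applying it with $h_1=M_f(\cdot,\phi(\zeta))$, $h_2=f$ and $p=\phi(\zeta)$, uniqueness of the BMA gives $M_{f\circ\phi}(\cdot,\zeta)=M_f(\cdot,\phi(\zeta))\circ\phi$, whose pole is the $z$ with $\phi(z)=P_f(\phi(\zeta))$; that is, $P_{f\circ\phi}(\zeta)=\phi^{-1}\big(P_f(\phi(\zeta))\big)$. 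Choosing $\phi=\sigma_a$ (so $\phi^{-1}=\sigma_{-a}$) yields \eqref{prope 2}, and choosing $\phi(z)=rz$ (so $f\circ\phi=f_r$ and $\phi^{-1}(z)=z/r$) yields \eqref{prope 3}.

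If instead one follows the computational route announced before the statement, part $(iii)$ is immediate from $f_r'(z)=r\,f'(rz)$ and $f_r''(z)=r^2f''(rz)$, whence $(f_r'/f_r'')(z)=\tfrac1r(f'/f'')(rz)$. For part $(ii)$ one uses $\sigma_a'(z)=(1-|a|^2)/(1+\overline a z)^2$ and $\sigma_a''/\sigma_a'(z)=-2\overline a/(1+\overline a z)$ to get
\[
\frac{(f\circ\sigma_a)''}{(f\circ\sigma_a)'}(z)=\frac{f''}{f'}(\sigma_a(z))\,\sigma_a'(z)-\frac{2\overline a}{1+\overline a z},
\]
substitutes this into $P_{f\circ\sigma_a}(z)=z+2(f\circ\sigma_a)'/(f\circ\sigma_a)''(z)$ and, writing $w=\sigma_a(z)$ and $(f''/f')(w)=2/(P_f(w)-w)$, simplifies; a factor $1+\overline a z$ cancels between numerator and denominator, and the expression collapses to $(P_f(w)-a)/(1-\overline a P_f(w))=\sigma_{-a}(P_f(w))$.

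There is essentially no real obstacle here, only bookkeeping: in the conceptual approach the only thing worth checking carefully is the preservation of second-order contact under right-composition, and in the direct approach the only delicate step is the cancellation in $(ii)$, which is exactly what makes the identity take its clean M\"obius form.
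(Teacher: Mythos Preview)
Your argument is correct. The paper itself offers no proof beyond ``a straightforward computation shows the following result,'' so your computational route is exactly what the authors intend, and your write-up is in fact more complete than theirs. The conceptual argument you add---deducing $M_{f\circ\phi}(\cdot,\zeta)=M_f(\cdot,\phi(\zeta))\circ\phi$ from uniqueness of the BMA and reading off the pole---is a genuinely different and cleaner route that explains \emph{why} the identities take their M\"obius-equivariant form; just make explicit that this step needs $\phi$ to be M\"obius (not merely conformal), since otherwise $M_f(\cdot,\phi(\zeta))\circ\phi$ need not be a M\"obius transformation and the uniqueness characterization of the BMA would not apply. Both $\sigma_a$ and $z\mapsto rz$ are M\"obius, so your applications are fine.
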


\begin{rem}
   If $f$ is a concave function, it follows from Theorem A and (\ref{prope 3}), that \[|P_{f_r}(z)|\leq \frac{1}{r}|rz|\leq 1.\]
   So, for $r\in (0,1)$, $f_r$ is a concave function. This tells us that concavity, in some sense, is a hereditary property. 
\end{rem}

In the following lemma, we show that the operators $A_f$ and $P_f$ are closely related; therefore, $A_f$ gives us information about the localizations of the poles of the BMA of $f$.

\begin{lemma} \label{lema P_f}
Let $f$ be a locally univalent analytic mapping defined in $\D$, then for any $z\in\D$ 
\[P_f(z)=\frac{w+z}{1+\overline{z}w},\quad\quad w=\frac{1}{A_f(z)}.\]
\end{lemma}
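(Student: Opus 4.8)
The plan is to compute both sides and verify they agree. First I would recall the definition of the pole function, $P_f(z) = z + 2f'(z)/f''(z)$, and rewrite it in terms of $A_f$. Since $A_f(z) = \tfrac{1-|z|^2}{2}\cdot\tfrac{f''}{f'}(z) - \overline z$, we can solve for $\tfrac{f''}{f'}(z)$, obtaining $\tfrac{f''}{f'}(z) = \tfrac{2(A_f(z)+\overline z)}{1-|z|^2}$, and hence
\begin{equation*}
2\,\frac{f'}{f''}(z) = \frac{1-|z|^2}{A_f(z)+\overline z}.
\end{equation*}
Substituting into the formula for $P_f$ gives $P_f(z) = z + \tfrac{1-|z|^2}{A_f(z)+\overline z}$.

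The second step is purely algebraic: I would show that this last expression equals $\tfrac{w+z}{1+\overline z w}$ with $w = 1/A_f(z)$. Writing the target as $\tfrac{1/A_f(z) + z}{1 + \overline z/A_f(z)} = \tfrac{1 + zA_f(z)}{A_f(z) + \overline z}$, it suffices to check that
\begin{equation*}
z + \frac{1-|z|^2}{A_f(z)+\overline z} = \frac{1 + zA_f(z)}{A_f(z)+\overline z},
\end{equation*}
which after clearing the common denominator reduces to $z(A_f(z)+\overline z) + 1 - |z|^2 = 1 + zA_f(z)$, i.e. $z\overline z + 1 - |z|^2 = 1$, which holds since $|z|^2 = z\overline z$. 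This confirms the identity.

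I would also note the degenerate cases for completeness: when $f''(z) = 0$ we have $P_f(z) = \infty$, and correspondingly $A_f(z) = -\overline z$, so $w = 1/A_f(z) = -1/\overline z$ lies outside $\overline{\D}$ and indeed $1 + \overline z w = 1 + \overline z(-1/\overline z) = 0$, making the Möbius expression infinite; when $A_f(z) = 0$ (which, by Pommerenke's theory, cannot happen for locally univalent $f$, since $|A_f|$ is then forced to be at least... — in any case) the formula is read with $w = \infty$, giving $P_f(z) = 1/\overline z$. The main (minor) obstacle is simply bookkeeping these boundary situations so the Möbius map $\sigma_z(w) = (w+z)/(1+\overline z w)$ is interpreted on the Riemann sphere; the core computation is routine.
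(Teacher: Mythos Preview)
Your proof is correct and is a direct algebraic verification: you solve $A_f(z)=\tfrac{1-|z|^2}{2}\tfrac{f''}{f'}(z)-\overline z$ for $\tfrac{f''}{f'}$, substitute into $P_f(z)=z+2\tfrac{f'}{f''}(z)$, and check the resulting expression agrees with $\sigma_z(1/A_f(z))$. The paper takes a different, more structural route: it precomposes with the disk automorphism $\sigma_a$, sets $F=f\circ\sigma_a$, and invokes the composition law $P_{f\circ\sigma_a}=\sigma_{-a}\circ P_f\circ\sigma_a$ from Proposition~\ref{Propie} to obtain $P_F(0)=\sigma_{-a}(P_f(a))$; then a direct computation of $P_F(0)$ from the chain rule yields $P_F(0)=1/A_f(a)$, and inverting $\sigma_{-a}$ gives the claim. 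Your approach is shorter and self-contained, while the paper's argument makes visible the conceptual content of the lemma, namely that $1/A_f(a)$ is precisely the pole (at the origin) of the ``recentered'' map $f\circ\sigma_a$, which is why $P_f$ and $A_f$ are linked via the automorphism $\sigma_a$.

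One small correction to your side remarks: it is \emph{not} true that $A_f(z)=0$ is impossible for locally univalent $f$ (take $f(z)=z$, where $A_f(0)=0$). Your formula still handles this correctly via $w=\infty$ on the Riemann sphere, so the slip is harmless to the argument.
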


\begin{proof} Let $F=f\circ \sigma_a$, where $\sigma_a(z)=(z+a)/(1+\overline{a}z)$, $a\in \mathbb{D}$, is an automorphism of the unit disk. From Proposition \ref{Propie}
\[P_F=\sigma_{-a}\circ P_f \circ \sigma_a.
\]
On the other hand
\[P_F(z) = z + \displaystyle\frac{2(1 + \overline{a}z)^2}{\sigma_a'(z)(f''/f')(\sigma_a(z)) - 2\overline a(1 + \overline{a}z)}.\] 
Therefore, 
\begin{align}\label{A}
w=\frac{1}{A_f(a)} = \frac{2}{ (1-|a|^2)(f''/f')(a) - 2\overline{a}} = P_F(0)=\frac{P_f(a)-a}{1-\overline{a}P_f(a)}.
\end{align}

\end{proof}

\begin{rem}  An immediate consequence of the previous lemma is $|A_f(z)|>1$ if and only if $|P_f(z)|<1$, for all $z\in \D$. Thus, $f$ is a convex mapping if and only if $A_f(z)$ lies inside $\D$, which is equivalent to $w$ lies outside $\D$ and therefore also $P_f$. Analogously, $f$ is a concave mapping if and only if $A_f$ lies outside $\overline \D$ and therefore $P_f$ lies in $\D$. \\
  
\end{rem}

\section{Location of $P_f$ in terms of $\mu_f$ and $\alpha_f$}

\begin{theo}\label{order inf}
  Let $f$ be a locally univalent analytic mapping defined in $\D$. If $\mu_f>0$, then 
  \begin{equation} \label{E}
  |P_f(z)|\leq \frac{1+\mu_f|z|}{|z|+\mu_f}, 
  \end{equation}
for all $z\in\D$. The inequality (\ref{E}) is sharp.
\end{theo}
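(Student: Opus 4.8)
The plan is to use Lemma~\ref{lema P_f}, which expresses $P_f(z)$ as the image of $w = 1/A_f(z)$ under the disk automorphism $\sigma_z$. Since $\mu_f>0$ we have $|A_f(z)|\geq \mu_f$ for every $z\in\D$, hence $|w| = 1/|A_f(z)| \leq 1/\mu_f$. Thus $w$ ranges over the closed disk $\overline{\D}(0,1/\mu_f)$ (or a subset of it). The task then reduces to a purely geometric one: bound $|\sigma_z(w)| = \left|\dfrac{w+z}{1+\overline{z}w}\right|$ over all $w$ in that disk, for fixed $z\in\D$.

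The key computational step is to maximize $\left|\dfrac{w+z}{1+\overline{z}w}\right|$ subject to $|w|\leq R$, where $R = 1/\mu_f$. I would argue that the maximum of this modulus over the closed disk $|w|\leq R$ is attained on the boundary $|w|=R$ (the function $w\mapsto \sigma_z(w)$ being a Möbius map, hence mapping the circle $|w|=R$ to a circle and the disk $|w|<R$ to one of the two complementary disks; in either case the extreme modulus on $\overline{\D}(0,R)$ is on the boundary circle). On $|w|=R$, I would write $w = Re^{i\theta}$ and compute, or more slickly use the standard fact that for the Möbius transformation $w\mapsto (w+z)/(1+\overline z w)$ the image of the circle $|w|=R$ is a circle whose diameter lies along the line through $\pm z/|z|$ (by symmetry), with endpoints $\sigma_z(R\,z/|z|)$ and $\sigma_z(-R\,z/|z|)$; the larger of these two moduli is $\dfrac{R+|z|}{1+R|z|}$ when $R>1$ — wait, I should be careful, since here $R = 1/\mu_f \geq 1$ may hold only if $\mu_f \leq 1$, and indeed $\mu_f\leq 1$ always (with equality iff $f$ is concave). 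So $R\geq 1$, and the maximum modulus is $\dfrac{R+|z|}{1+R|z|} = \dfrac{1/\mu_f + |z|}{1 + |z|/\mu_f} = \dfrac{1 + \mu_f|z|}{|z| + \mu_f}$, which is exactly the claimed bound~\eqref{E}.

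For sharpness, I would exhibit a function $f$ for which equality holds, or at least for which the bound is approached. The natural candidates are the extremal functions for Pommerenke's lower order: one wants $f$ with $\mu_f$ equal to a prescribed value and with $A_f$ attaining modulus exactly $\mu_f$ at some point, pointing in the direction that realizes the extreme value of $\sigma_z$. A concrete choice is obtained by taking $f$ so that $\dfrac{f''}{f'}(z)$ is constant or has a simple prescribed form; for instance functions of the type $f'(z) = (1-z)^{-c}$ or the half-plane/strip mappings, suitably normalized, should make $A_f$ constant in modulus. I would check that for such $f$ one can choose $z\in(0,1)$ with $A_f(z) = \mu_f$ (real and positive, say $w = 1/\mu_f>0$), giving $P_f(z) = \dfrac{1/\mu_f + z}{1 + z/\mu_f}$, matching the bound.

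The main obstacle I anticipate is the sharpness part: producing an explicit extremal function (or family) with a given lower order $\mu_f$ whose operator $A_f$ actually attains the infimum $\mu_f$ and does so in the correct geometric position. The inequality itself is routine once Lemma~\ref{lema P_f} is in hand — it is just the observation that a disk of radius $R$ around the origin is mapped by $\sigma_z$ onto a disk, together with an elementary computation of the farthest point of that image disk from the origin. I would also need to handle the degenerate direction carefully (the two candidate extreme points $\sigma_z(\pm R z/|z|)$, checking which has larger modulus when $R\geq 1$) and note that the bound is vacuous or trivial when $|z|$ is small relative to $\mu_f$ only in the sense that the right-hand side stays $\geq 1$ — actually since $R\ge 1$ the right side is always $\le$ something finite and $\ge 1$, consistent with $|P_f|$ possibly exceeding $1$, so there is nothing to worry about there.
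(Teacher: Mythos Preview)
Your plan is the same as the paper's: invoke Lemma~\ref{lema P_f} to write $P_f(z)=\sigma_z(w)$ with $w=1/A_f(z)$ and $|w|\le R:=1/\mu_f$, and then bound $|\sigma_z(w)|$ over the disk $|w|\le R$. The geometric step, however, breaks down. The M\"obius map $\sigma_z(w)=(w+z)/(1+\bar z w)$ has its pole at $w=-1/\bar z$, and since one always has $\mu_f\le 1$ (hence $R\ge 1$), this pole lies \emph{inside} $\{|w|\le R\}$ whenever $|z|>\mu_f$. In that range $\sigma_z(\{|w|\le R\})$ is the \emph{unbounded} complementary region of the image circle, so $\sup_{|w|\le R}|\sigma_z(w)|=\infty$ and no finite upper bound can follow from the constraint $|w|\le R$ alone. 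Your sentence ``in either case the extreme modulus on $\overline{\D}(0,R)$ is on the boundary circle'' is false for the maximum in this situation. Even when $|z|<\mu_f$ (pole outside), your identification of the farthest point is wrong: for $R>1$ one checks that $\dfrac{R-|z|}{1-R|z|}>\dfrac{R+|z|}{1+R|z|}$, so the diameter endpoint of larger modulus is $\sigma_z(-Rz/|z|)$, not $\sigma_z(Rz/|z|)$.

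The paper's own argument commits the same oversight: the passage
\[
1-\frac{(1-|w|^2)(1-|z|^2)}{|1+\bar z w|^2}\ \le\ 1-\frac{(1-1/\mu_f^2)(1-|z|^2)}{(1+|z|/\mu_f)^2}
\]
is not valid in general (take $z=-1/2$, $\mu_f=1/3$, $w=3$: the left side is $25$, the right side $49/25$). In fact the paper's Example~3.2 with $0<|a|<1$ gives $\mu_f=|a|$ and shows explicitly that $P_f$ covers all of $\C\setminus\D$, so $|P_f|$ is unbounded while the right-hand side of~\eqref{E} never exceeds $1/\mu_f$. Thus the inequality~\eqref{E} as stated cannot hold for all $z\in\D$ when $\mu_f<1$; your argument (and the paper's) only goes through when $R\le 1$, i.e.\ $\mu_f\ge 1$, which forces $\mu_f=1$ and reduces to the concave case $|P_f|\le 1$.
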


\begin{proof}
  Let $w=A_f^{-1}(z)$, for $z\in \D$. We have, from Lemma \ref{lema P_f}, that
\begin{align*}
    |P_f(z)|^2&= \left|\frac{w+z}{1+\overline{z}w}\right|^2=1-\frac{(1-|w|^2)(1-|z|^2)}{|1+\overline{z}w|^2}\\
    &\leq  1-\frac{\left(1-\dfrac{1}{\mu_f^2}\right)(1-|z|^2)}{\left(1+|z|\dfrac{1}{\mu_f}\right)^2}=\frac{(1+\mu_f|z|)^2}{(|z|+\mu_f)^2}.
\end{align*}
The example below shows the last statement.
\end{proof} 
We observe that Theorem \ref{order inf} also implies that if $f$ is a concave function, then $|P_f(z)|\leq 1$ for all $z \in \D$.
\begin{example}
    Let $a\in \R$, $a\neq 0$, and $f(z)=\left(\dfrac{1+z}{1-z}\right)^a$, $z\in \D$. Thus, $$P_f(z)=\dfrac{1+az}{a+z}=\dfrac{1}{\sigma_a(z)}.$$ We observe that $P_f(z)$ covers all $\C\setminus \D$ if $|a|<1$. 
It is known, see \cite{Pomm2008}, that 
\[\mu_f = \min\{|a|, 1\}\qquad \text{and} \qquad  \alpha_f = \max\{|a|, 1\}.\]
So, if $|a|<1$, it follows from Theorem \ref{order inf} that $$|P_f(z)|\leq \frac{1+|a||z|}{|a|+|z|},\quad\quad \forall z\in \D.$$

\end{example}

The following result generalizes the geometric observation obtained in the previous example.
\begin{prop}
For any $a\in \D$, $P_f(a)$ lies outside of $\sigma_a(D(0,1/\alpha_f))$. 
\end{prop}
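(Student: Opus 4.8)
The plan is to read the statement off directly from Lemma~\ref{lema P_f}. Writing $w=1/A_f(a)$, that lemma gives $P_f(a)=(w+a)/(1+\overline a w)=\sigma_a(w)$; in other words, the pole at $a$ is the image under the disk automorphism $\sigma_a$ of the reciprocal of $A_f(a)$. So it suffices to locate $w$ relative to the disk $D(0,1/\alpha_f)$ and then transport the conclusion by $\sigma_a$.

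First I would record the trivial estimate furnished by the definition of the upper order: since $\alpha_f=\sup_{z\in\D}|A_f(z)|$, we have $|A_f(a)|\le\alpha_f$, hence $|w|=1/|A_f(a)|\ge 1/\alpha_f$, with the convention that $w=\infty$ when $A_f(a)=0$ (consistent, since then $|w|=\infty\ge 1/\alpha_f$) and with the bound being vacuous when $\alpha_f=\infty$. Thus $w$ lies in the complement $\widehat{\C}\setminus D(0,1/\alpha_f)$.

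Next I would invoke that $\sigma_a$, regarded as a Möbius transformation of the Riemann sphere, is a bijection of $\widehat{\C}$ onto itself; in particular it maps $D(0,1/\alpha_f)$ onto $\sigma_a(D(0,1/\alpha_f))$ and therefore carries $\widehat{\C}\setminus D(0,1/\alpha_f)$ onto $\widehat{\C}\setminus\sigma_a(D(0,1/\alpha_f))$. Applying $\sigma_a$ to the membership obtained in the previous step yields $P_f(a)=\sigma_a(w)\notin\sigma_a(D(0,1/\alpha_f))$, which is the assertion. One may also note, using the standard inequality $\alpha_f\ge 1$ for locally univalent $f$, that $D(0,1/\alpha_f)\subseteq\overline{\D}$ and that the pole $-1/\overline a$ of $\sigma_a$ lies outside this disk, so $\sigma_a(D(0,1/\alpha_f))$ is again a genuine disk; when $\alpha_f=1$ this recovers the earlier observation that a convex $f$ has all poles outside $\D$, since $\sigma_a(\D)=\D$.

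There is no substantial obstacle here: the argument is essentially a one-line consequence of Lemma~\ref{lema P_f} together with the definition of $\alpha_f$. The only points deserving a word of care are bookkeeping ones — the degenerate values $A_f(a)=0$ and $\alpha_f=\infty$, handled by passing to $\widehat{\C}$ as above, and the identification of the Möbius map appearing in Lemma~\ref{lema P_f} with $\sigma_a$, so that "apply $\sigma_a$" is literally the operation being performed.
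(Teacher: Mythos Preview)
Your proof is correct and follows essentially the same route as the paper: both arguments rest on the identity $P_f(a)=\sigma_a\bigl(1/A_f(a)\bigr)$ from Lemma~\ref{lema P_f} together with the trivial bound $|A_f(a)|\le\alpha_f$. The paper phrases this via $F=f\circ\sigma_a$ and the relation $P_F(0)=1/A_f(a)$ from~\eqref{A} combined with Proposition~\ref{Propie}\,(ii), whereas you invoke Lemma~\ref{lema P_f} directly; the content is identical, and your handling of the degenerate cases $A_f(a)=0$ and $\alpha_f=\infty$ is a welcome bit of extra care.
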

\begin{proof}
Let $F=f\circ \sigma_a$, where $\sigma_a(z)=(z+a)/(1+\overline{a}z)$, $a\in \mathbb{D}$. It follows from \eqref{A} that $|P_F(0)|=\dfrac{1}{|A_f(a)|}\geq \dfrac{1}{\alpha_f}.$ Thus, from (\ref{prope 2}) we have $P_f(a)=\sigma_a \circ P_F(0).$
\end{proof}

The following result gives us another localization of $P_f$ in terms of the upper and lower orders of $f$.

\begin{theo} Let $f$ be a locally univalent analytic mapping defined in the unit disk. Then
\begin{enumerate}[wide]
   \item [$(i)$] \(\displaystyle\left|P_f(z)-\frac{\alpha_f^2-1}{\alpha_f^2-|z|^2}\cdot z\right|\geq \alpha_f\cdot\frac{1-|z|^2}{\alpha_f^2-|z|^2},\quad \)
for all $z\in \D$.
\item [$(ii)$]
\(\displaystyle\left|P_f(z)-\frac{\mu_f^2-1}{\mu_f^2-|z|^2}\cdot z\right|\leq \mu_f\cdot\frac{1-|z|^2}{\mu_f^2-|z|^2},\quad\) for all $|z|<\mu_f$.
\end{enumerate}
\end{theo}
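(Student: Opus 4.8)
The plan is to read both inequalities off Lemma~\ref{lema P_f} together with the elementary formula for the image of a circle under a disk-automorphism-type Möbius map; for part (i) this is just the explicit form of the disk appearing in the Proposition that precedes this statement. Fix $z\in\D$. By Lemma~\ref{lema P_f}, $P_f(z)=\sigma_z(w)$ where $\sigma_z(\zeta)=(\zeta+z)/(1+\overline z\zeta)$ is regarded as a Möbius transformation in the variable $\zeta$ and $w=1/A_f(z)$. Since $\alpha_f=\sup_\D|A_f|$ and $\mu_f=\inf_\D|A_f|$, we have $|w|\ge 1/\alpha_f$ (which is what is used for (i)) and $|w|\le 1/\mu_f$ (which is used for (ii)).

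The core step is the standard computation: if $\rho|z|<1$, then $\sigma_z$ maps the circle $\{|\zeta|=\rho\}$ onto the circle with centre $c(\rho)=\dfrac{(1-\rho^2)\,z}{1-\rho^2|z|^2}$ and radius $R(\rho)=\dfrac{\rho\,(1-|z|^2)}{1-\rho^2|z|^2}$. I would obtain this by inserting $\zeta=\sigma_{-z}(u)$ into $|\zeta|^2=\rho^2$ and rewriting the resulting equation in $u$ in the form $|u-c(\rho)|^2=R(\rho)^2$. The hypothesis $\rho|z|<1$ says precisely that the pole $-1/\overline z$ of $\sigma_z$, of modulus $1/|z|>\rho$, lies outside $\overline{D(0,\rho)}$; hence $\sigma_z$ is continuous on $\overline{D(0,\rho)}$ and therefore maps it onto the \emph{bounded} disk $\overline{D(c(\rho),R(\rho))}$ (equivalently, $\sigma_z(0)=z$ lies in $D(c(\rho),R(\rho))$, since $|z-c(\rho)|<R(\rho)$ simplifies to $\rho|z|<1$). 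Consequently $\sigma_z$ carries $\{|\zeta|\ge\rho\}$ into $\{\,|u-c(\rho)|\ge R(\rho)\,\}$ and $\{|\zeta|\le\rho\}$ into $\{\,|u-c(\rho)|\le R(\rho)\,\}$.

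For (i) I apply this with $\rho=1/\alpha_f$: since $\alpha_f\ge 1$ we have $\rho|z|\le|z|<1$, the formulas become $c(1/\alpha_f)=\dfrac{(\alpha_f^2-1)z}{\alpha_f^2-|z|^2}$ and $R(1/\alpha_f)=\dfrac{\alpha_f(1-|z|^2)}{\alpha_f^2-|z|^2}$, and $|w|\ge 1/\alpha_f$ gives $|P_f(z)-c(1/\alpha_f)|\ge R(1/\alpha_f)$, which is (i). For (ii) I apply it with $\rho=1/\mu_f$: the hypothesis $|z|<\mu_f$ is exactly the requirement $\rho|z|<1$ above (and it also guarantees $\mu_f^2-|z|^2>0$, so the centre and radius in the statement are well defined), giving $c(1/\mu_f)=\dfrac{(\mu_f^2-1)z}{\mu_f^2-|z|^2}$ and $R(1/\mu_f)=\dfrac{\mu_f(1-|z|^2)}{\mu_f^2-|z|^2}$, and $|w|\le 1/\mu_f$ gives $|P_f(z)-c(1/\mu_f)|\le R(1/\mu_f)$, which is (ii).

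The one point demanding care — the main obstacle — is the orientation: one must be certain that $\sigma_z$ sends the \emph{disk} $D(0,\rho)$ onto the bounded disk $D(c(\rho),R(\rho))$ rather than onto the unbounded component of the complement of the image circle. This is controlled by the location of the pole of $\sigma_z$ (equivalently, by the check $\sigma_z(0)=z\in D(c(\rho),R(\rho))$), and it is exactly here that the restriction $|z|<\mu_f$ is forced in part (ii). Everything else — the passage from $|\sigma_{-z}(u)|^2=\rho^2$ to $|u-c(\rho)|^2=R(\rho)^2$ and the substitution of the two values of $\rho$ — is routine algebra.
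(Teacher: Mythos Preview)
Your proposal is correct and follows essentially the same approach as the paper: both arguments start from Lemma~\ref{lema P_f}, writing $P_f(z)=\sigma_z(w)$ with $w=1/A_f(z)$, and then exploit $|w|\ge 1/\alpha_f$ (resp.\ $|w|\le 1/\mu_f$). The only difference is packaging: the paper simplifies $\sigma_z(w)-\dfrac{(\alpha_f^2-1)z}{\alpha_f^2-|z|^2}$ algebraically to $\dfrac{1-|z|^2}{\alpha_f^2-|z|^2}\cdot\dfrac{\alpha_f^2 w+z}{1+\overline z\,w}$ and then checks $\bigl|\tfrac{\alpha_f^2 w+z}{1+\overline z w}\bigr|\ge\alpha_f$ directly, which is precisely your image-of-a-circle computation written out, while your geometric phrasing has the advantage of explaining where the centre and radius come from and why the restriction $|z|<\mu_f$ is needed in~(ii).
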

\begin{proof}
 Let $z\in \D$ and $w=\dfrac{1}{A_f(z)}$. It is enough to observe, from Lemma \ref{lema P_f}, that
\begin{align*}
  \left|P_f(z)-\dfrac{\alpha_f^2-1}{\alpha_f^2-|z|^2}\cdot z\right|& =\left|\frac{w+z}{1+\overline{z}w}-\dfrac{\alpha_f^2-1}{\alpha_f^2-|z|^2}\cdot z\right|=\frac{1-|z|^2}{\alpha_f^2-|z|^2}\left|\frac{\alpha_f^2w+z}{1+\overline{z}w}\right|.
\end{align*}
Finally, using $\alpha_f|w|\geq 1$ for all $z\in \D$, a straightforward computation shows that $\left|\dfrac{\alpha_f^2w+z}{1+\overline{z}w}\right|\geq \alpha_f$, which proves $(i)$. To show $(ii)$, it suffices to follow the same calculations made in $(i)$ and use $\mu_f|w|\leq 1$.
\end{proof}

\begin{rem}
  It is easy to see that $\alpha_f=1$ in $(i)$ implies $|P_f(z)|\geq 1$, which is equivalent when $f$ is a convex mapping. In the same way, if $\mu_f=1$, which is the case when $f$ is a concave mapping, then from $(ii)$ it follows $|P_f(z)|\leq 1$ for all $z\in\D$. It also follows from $(i)$ that if $f\in S$, then
\[\left|P_f(z)-\frac{3z}{4-|z|^2}\right|\geq \frac{2(1-|z|^2)}{4-|z|^2},\]
for all $z\in \D$.  
\end{rem}

\section{Special Subclasses}
Since the location of $P_f$ for convex functions is already known, in this section we wish to study its location for other families of analytic functions. In particular, we investigate the case in which $f$ is convex of order $\alpha$.
\begin{theo} Let $f$ be a locally univalent analytic mapping defined in $\D$. If there is $t\geq 0$ such that $|P_f(z)|\geq 1+t$ for all $z\in\D$, then
\[\textup{Re} \left\{1+z\frac{f''}{f'}(z)\right\}\geq \frac{t}{2+t}.\]
\end{theo}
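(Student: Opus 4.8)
The plan is to rewrite both the hypothesis and the conclusion in terms of the single quantity $P_f(z)$, using the explicit formula $P_f(z)=\zeta+2(f'/f'')(\zeta)$ recorded in the introduction. First I would dispose of the points where $f''(z)=0$: there $1+z(f''/f')(z)=1$, and since $t/(2+t)<1$ for all $t\ge 0$ the bound holds trivially (consistently with the hypothesis, since $|P_f(z)|=\infty$ at such points). So assume $f''(z)\neq 0$, write $P=P_f(z)$, and note that $P-z=2(f'/f'')(z)$ gives $(f''/f')(z)=2/(P-z)$, hence
\[
1+z\frac{f''}{f'}(z)=1+\frac{2z}{P-z}=\frac{P+z}{P-z}.
\]
The key step is the identity obtained on taking real parts: expanding $(P+z)\overline{(P-z)}$, the cross terms $z\overline{P}-\overline{z}P$ are purely imaginary, so
\[
\textup{Re}\left\{1+z\frac{f''}{f'}(z)\right\}=\textup{Re}\,\frac{P+z}{P-z}=\frac{|P|^2-|z|^2}{|P-z|^2}.
\]
This is the quantitative form of part $(i)$ of Theorem~A, and it is exactly what links the size of $|P_f(z)|$ to the real part we must bound.

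What remains is an elementary estimate. From $|P|\ge 1+t>1>|z|$ we get $|P|^2-|z|^2>0$, while the triangle inequality gives $|P-z|\le|P|+|z|$; hence
\[
\frac{|P|^2-|z|^2}{|P-z|^2}\ \ge\ \frac{(|P|-|z|)(|P|+|z|)}{(|P|+|z|)^2}\ =\ \frac{|P|-|z|}{|P|+|z|}.
\]
Since $(\rho,r)\mapsto(\rho-r)/(\rho+r)$ is non-decreasing in $\rho$ and non-increasing in $r$ on $\rho>0$, $r\ge 0$, replacing $|P|$ by its lower bound $1+t$ and $|z|$ by $1$ yields
\[
\frac{|P|-|z|}{|P|+|z|}\ \ge\ \frac{(1+t)-|z|}{(1+t)+|z|}\ \ge\ \frac{(1+t)-1}{(1+t)+1}\ =\ \frac{t}{2+t},
\]
which is the assertion. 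The argument is entirely pointwise, so the hypothesis "for all $z\in\D$" is used only to know $|P_f(z)|\ge 1+t$ at the $z$ under consideration.

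I do not expect a genuine obstacle: the two things to watch are that $|P|-|z|>0$, so that the factorization $|P|^2-|z|^2=(|P|-|z|)(|P|+|z|)$ is used with the correct sign, and that the monotonicity of $(\rho-r)/(\rho+r)$ in each variable is precisely what legitimizes substituting the extreme admissible values $\rho=1+t$ and $r=1$. If one also wanted sharpness, the computation suggests testing $f(z)=((1+z)/(1-z))^a$ with $a=1+t$ near the boundary point $z=1$, but the statement as posed asks only for the inequality.
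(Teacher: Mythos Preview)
Your proof is correct. The paper argues differently: it first notes that $|P_f|\ge 1$ forces $f$ to be convex, writes $1+z(f''/f')=(1+h)/(1-h)$ with $h:\D\to\D$, $h(0)=0$, observes that $P_f(z)=z/h(z)$ (so $|h(z)|\le |z|/(1+t)\le 1/(1+t)$), and then reads off the real-part bound from the image of a small disk under $(1+w)/(1-w)$. Your route bypasses the subordination setup entirely: the identity $1+z(f''/f')=(P+z)/(P-z)$ together with $\textup{Re}\,[(P+z)/(P-z)]=(|P|^2-|z|^2)/|P-z|^2$ reduces everything to a triangle-inequality/monotonicity estimate. This is more elementary and transparently pointwise; it also exhibits the sharper $|z|$-dependent bound $((1+t)-|z|)/((1+t)+|z|)$ en route, which the paper's argument would recover only by using $|h(z)|\le|z|/(1+t)$ rather than the cruder $|h(z)|\le 1/(1+t)$ it actually invokes. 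The paper's approach, in turn, fits the subordination template it reuses for the Janowski and Robertson classes and makes the relation $P_f(z)=z/h(z)$ explicit.

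One cosmetic slip: the chain ``$|P|\ge 1+t>1>|z|$'' fails when $t=0$, but what you actually need, $|P|\ge 1>|z|$ and hence $|P|^2-|z|^2>0$, is still true, so the argument is unaffected.
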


\begin{proof}
Since $f$ is a convex function, there is $h:\D \to \D$, analytic with $h(0)=0$, such that
\[1+z\frac{f''}{f'}(z)=\frac{1+h(z)}{1-h(z)}.\]
Now, if $h(z)=zg(z)$, it is easy to see that $P_f(z)=\displaystyle \frac{1}{g(z)}.$ It follows that $|h(z)|\leq \displaystyle \frac{1}{1+t}$ and so
\[\text{Re} \left\{\frac{1+h(z)}{1-h(z)}\right\}\geq \frac{t}{2+t},\]
for all $z\in\D$.
\end{proof}
The following example shows that the converse of the previous theorem is not true.

\begin{example}
Let $\alpha=\dfrac{t}{2+t}$ and $f$ a locally univalent analytic mapping defined in $\D$ such that $f(0)=f'(0)-1=0$ and satisfies
\begin{align} \label{B} 
    1+z\frac{f''}{f'}(z)=\frac{1+(1-2\alpha)z}{1-z}=:g_{\alpha}(z).
\end{align}
The function $g_{\alpha}$ maps $\D$ univalently onto the half-plane $\text{Re}\,w > \alpha$. So $f$ is convex of order $\alpha$ and also
\[P_f(z)=\frac{1-\alpha z}{1-\alpha}\quad \Rightarrow \quad  1\leq |P_f(z)|\leq \frac{1+\alpha}{1-\alpha}=1+t.\]
\end{example}

\begin{theo}
Let $f$ be a convex mapping of order $\alpha\in [0,1)$, defined in $\D$. Then 
\begin{align}\label{C}
 \left|P_f(z)+\frac{\alpha z}{1-\alpha}\right|\geq \frac{1}{1-\alpha},   
\end{align}
for all $z\in\D$. The inequality (\ref{C}) is sharp.
\end{theo}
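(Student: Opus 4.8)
The plan is to repeat the computation from the proof of the preceding theorem, but this time carrying the convexity order $\alpha$. Since $f$ is convex of order $\alpha$, the function $\bigl(1+zf''/f'(z)-\alpha\bigr)/(1-\alpha)$ has positive real part and equals $1$ at the origin, so there is a Schwarz function $\omega:\D\to\D$ with $\omega(0)=0$ such that
\[
1+z\frac{f''}{f'}(z)=\frac{1+(1-2\alpha)\omega(z)}{1-\omega(z)}.
\]
(The boundary circle $|\omega|=1$ maps onto the line $\mathrm{Re}\,w=\alpha$, which is the correct half-plane.) Solving for $f''/f'$ yields $z\,f''/f'=2(1-\alpha)\omega/(1-\omega)$, and then the formula $P_f(z)=z+2f'/f''(z)$ gives
\[
P_f(z)=z+\frac{z\bigl(1-\omega(z)\bigr)}{(1-\alpha)\,\omega(z)}.
\]

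Next I would write $\omega(z)=z\,g(z)$; by the Schwarz lemma $g$ maps $\D$ into $\overline{\D}$. Substituting $\omega=zg$ into the displayed identity and simplifying, the expression collapses to
\[
P_f(z)+\frac{\alpha z}{1-\alpha}=\frac{1}{(1-\alpha)\,g(z)},
\]
which is precisely the identity $P_f=1/g$ from the previous proof in the case $\alpha=0$. The estimate \eqref{C} then follows at once by taking moduli and using $|g(z)|\le 1$:
\[
\left|P_f(z)+\frac{\alpha z}{1-\alpha}\right|=\frac{1}{(1-\alpha)\,|g(z)|}\ge\frac{1}{1-\alpha}.
\]
When $g(z)=0$ (equivalently $f''(z)=0$, so $P_f(z)=\infty$) the inequality is trivially satisfied.

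For sharpness I would reuse the function of the preceding example, the one with $1+zf''/f'=g_\alpha(z)=(1+(1-2\alpha)z)/(1-z)$; this corresponds to $\omega(z)=z$, i.e.\ $g\equiv 1$, so that $P_f(z)=(1-\alpha z)/(1-\alpha)$ and hence $P_f(z)+\alpha z/(1-\alpha)=1/(1-\alpha)$ identically, attaining equality in \eqref{C} at every point of $\D$.

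\textbf{Expected difficulty.} There is essentially no obstacle here: the whole argument reduces to the algebraic identity $P_f+\alpha z/(1-\alpha)=1/\bigl((1-\alpha)g\bigr)$ together with a single application of the Schwarz lemma. The only point needing a word of care is the degenerate situation $\omega\equiv 0$ (a Möbius image of the identity, for which $P_f\equiv\infty$) and, more generally, the zeros of $g$, all of which make the left-hand side infinite and hence the inequality automatic.
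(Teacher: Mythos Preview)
Your proof is correct and coincides with the paper's argument: the paper writes the subordination in the equivalent form $1+\tfrac{z}{1-\alpha}\tfrac{f''}{f'}=\tfrac{1+h}{1-h}$ (your $\omega$ is exactly their $h$), obtains $P_f(z)+\tfrac{\alpha z}{1-\alpha}=\tfrac{z}{(1-\alpha)h(z)}$, and concludes via $|h(z)|\le|z|$, which is the same as your factorization $\omega=zg$ with $|g|\le1$. Sharpness is shown with the same extremal function $g_\alpha$.
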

\begin{proof}
    Since $f$ is a convex function of order $\alpha$, then
    \[\text{Re} \left\{1+z\frac{f''}{f'}(z)\right\}\geq \alpha,
    \]
    for all $z\in \D$. Thus, there is $h:\D \to \D$ with $h(0)=0$ such that 
    \[1+\frac{z}{1-\alpha}\frac{f''}{f'}(z)=\frac{1+h(z)}{1-h(z)},
    \]
    for all $z\in \D$.
    From where
    \[P_f(z)=z+2\frac{z}{1-\alpha}\frac{1-h(z)}{2h(z)}=\frac{z(1-\alpha h(z))}{(1-\alpha)h(z)}.\]
    It follows that
    \[\left|P_f(z)+\frac{\alpha z}{1-\alpha}\right|=\left|\frac{z}{h(z)}\frac{1}{1-\alpha}\right|\geq \frac{1}{1-\alpha},
    \]
    for all $z\in \D$. The last inequality is given by Schwarz's lemma.
For the function $f$ defined by (\ref{B}) the equality occurs in (\ref{C}) for all $z\in \D$. This proves the last statement.
\end{proof}

\begin{coro}If $f$ is a convex mapping of order $\alpha\in [0,1)$, then 
\begin{equation} \label{alpha1}
  |P_f(z)|\geq \frac{1-\alpha|z|}{1-\alpha},  
\end{equation}
for all $z\in \D.$ The inequality (\ref{alpha1}) is sharp.
\end{coro}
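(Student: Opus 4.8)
The plan is to obtain \eqref{alpha1} as an immediate consequence of the preceding theorem together with a single application of the triangle inequality. Since $f$ is convex of order $\alpha$, inequality (\ref{C}) gives
\[
\left|P_f(z)+\frac{\alpha z}{1-\alpha}\right|\geq \frac{1}{1-\alpha}
\]
for every $z\in\D$. Writing $P_f(z)=\bigl(P_f(z)+\tfrac{\alpha z}{1-\alpha}\bigr)-\tfrac{\alpha z}{1-\alpha}$ and using the reverse triangle inequality $|a-b|\geq|a|-|b|$ with $a$ the quantity that is bounded below, I would conclude
\[
|P_f(z)|\ \geq\ \left|P_f(z)+\frac{\alpha z}{1-\alpha}\right|-\frac{\alpha|z|}{1-\alpha}\ \geq\ \frac{1}{1-\alpha}-\frac{\alpha|z|}{1-\alpha}\ =\ \frac{1-\alpha|z|}{1-\alpha},
\]
which is the asserted bound; the right-hand side is nonnegative because $\alpha|z|<1$.

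For the sharpness claim I would recall the extremal function already exhibited in the Example following the previous theorem: the map $f$ normalized by $f(0)=f'(0)-1=0$ and satisfying $1+z\frac{f''}{f'}(z)=g_\alpha(z)$ as in (\ref{B}) is convex of order $\alpha$ and has $P_f(z)=\frac{1-\alpha z}{1-\alpha}$. Evaluating at a real point $z=r\in(0,1)$ gives $|P_f(r)|=\frac{1-\alpha r}{1-\alpha}=\frac{1-\alpha|r|}{1-\alpha}$, so equality holds in (\ref{alpha1}) along the positive radius; letting $r$ vary over $(0,1)$ shows the bound is attained for every value of $|z|$ and therefore cannot be improved.

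There is essentially no obstacle here, since the estimate is a one-line deduction from the already-proved theorem. The only points requiring care are orienting the triangle inequality in the direction that preserves a lower bound, and checking that the candidate extremal map genuinely attains equality at each radius — which it does, exactly, at $z=r$ for $r\in(0,1)$, rather than merely asymptotically.
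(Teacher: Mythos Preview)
Your argument is correct and matches the paper's intended route: the corollary is stated without proof precisely because it follows from (\ref{C}) by the reverse triangle inequality, and sharpness is witnessed by the same extremal map from (\ref{B}). There is nothing to add.
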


\begin{rem}
 We can assume 
 \[\text{Re} \left\{1+z\frac{f''}{f'}(z)\right\}\geq \alpha,\]
 $\alpha\in (-\infty,1)$, and with the same argument used to prove (\ref{C}) to obtain
 \begin{equation} \label{alpha2}
  |P_f(z)|\geq \frac{1-|\alpha||z|}{1-\alpha},  
\end{equation}
for all $z\in \D.$
In particular,
 \begin{enumerate}[wide]
\item [$(i)$] if $f\in \mathcal{F}_0(\lambda)$, $1/2\leq \lambda \leq 1,$ we have that   
 \begin{equation*}
  |P_f(z)|\geq \frac{2+(1-2\lambda)|z|}{1+2\lambda}, 
 \end{equation*}
for all $z\in \D.$ Namely, $f\in \mathcal{F}_0(\lambda)$, $1/2\leq \lambda \leq 1,$ if $f$ is a locally univalent analytic function, with $f(0)=f'(0)-1=0$, which satisfies  
\[\text{Re} \left\{1+z\frac{f''}{f'}(z)\right\}\geq \frac{1}{2}-\lambda,\]
for all $z \in \D$. $\mathcal{F}_0(\lambda)$, $1/2\leq \lambda \leq 1,$ is called the Ozaki class of close to convex functions (see \cite{Allu 2019}).
 
\item [$(ii)$] Umezawa [\cite{Umezawa1952}, Theorem 1] studied the functions $f$ defined in $\D$, analytics with $f(0)=f'(0)-1=0$, satisfying 
\begin{equation}\label{U}
  \text{Re} \left\{1+z\frac{f''}{f'}(z)\right\}\geq -\frac{\alpha}{2\alpha-3}, 
\end{equation}
where $\alpha > 3/2$, for all $z\in \D$. It follows by (\ref{alpha1}) and (\ref{U}) 
\[|P_f(z)|\geq \frac{(2-|z|)\alpha-3}{3(\alpha-1)},\]
for all $z\in \D$.
\end{enumerate} 
\end{rem}

Now, we consider the class of Janowski convex functions $f$ defined in $\D$ that satisfy the subordination relation
\begin{equation}\label{F}
1+z\frac{f''}{f'}(z)\prec \frac{1+Az}{1+Bz},
\end{equation}
where $-1\leq B < A\leq 1$. Note that the Möbius
transformation on the right side of (\ref{F}) maps $\D$ onto the disk (or half-plane) with diameter
$((1-A)/(1-B),(1+A)/(1+B))$. The class of all Janowski convex functions was first introduced
and studied by Janowski in \cite{Janos1973}.

\begin{theo} \label{Jano}
Let $f$ be a function in Janowski's class, then
\begin{equation} \label{G}
|P_f(z)|\geq \frac{2-|A+B||z|}{A-B},    
\end{equation}
for all $z\in \D$. The inequality (\ref{G}) is sharp.   
\end{theo}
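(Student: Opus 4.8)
The plan is to use the subordination defining Janowski's class to introduce a Schwarz function and then express $P_f$ explicitly in terms of it, exactly in the spirit of the proofs of the preceding theorems. First I would rewrite the subordination \eqref{F} as
\[
1 + z\frac{f''}{f'}(z) = \frac{1 + A\omega(z)}{1 + B\omega(z)},
\]
where $\omega\colon\D\to\D$ is analytic with $\omega(0)=0$, so that Schwarz's lemma gives $|\omega(z)|\le|z|$ for all $z\in\D$. Solving this identity for $f''/f'$ yields $\dfrac{f''}{f'}(z) = \dfrac{(A-B)\,\omega(z)}{z\,(1+B\omega(z))}$, and substituting into $P_f(z) = z + 2\dfrac{f'}{f''}(z)$ produces the compact formula
\[
P_f(z) = \frac{z\bigl(2 + (A+B)\,\omega(z)\bigr)}{(A-B)\,\omega(z)}.
\]

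With this formula in hand the estimate is just two applications of Schwarz's lemma combined with the reverse triangle inequality. Writing $w=\omega(z)$ and recalling $A-B>0$, I would bound
\[
|P_f(z)| = \frac{|z|}{|w|}\cdot\frac{|2 + (A+B)w|}{A-B}\ \ge\ \frac{|2+(A+B)w|}{A-B}\ \ge\ \frac{2 - |A+B|\,|w|}{A-B}\ \ge\ \frac{2 - |A+B|\,|z|}{A-B},
\]
where the first inequality uses $|w|\le|z|$ (so $|z|/|w|\ge1$), the second is the reverse triangle inequality, and the third again uses $|w|\le|z|$. One should remark that $|A+B|\le|A|+|B|\le2$ and $|z|<1$ force $|A+B|\,|z|<2$, so the right-hand side is non-negative and the bound is not vacuous; the degenerate situations $w=0$ (where $P_f(z)=\infty$) and $\omega\equiv0$ (where $f$ is affine) cause no difficulty.

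For sharpness I would take $\omega(z)=z$, i.e.\ the function $f$ with $1+zf''/f'(z)=(1+Az)/(1+Bz)$, which trivially belongs to the class. Then $P_f(z)=\bigl(2+(A+B)z\bigr)/(A-B)$, and choosing $z=-r$ with $r\in(0,1)$ when $A+B\ge0$, or $z=r$ when $A+B<0$, makes $(A+B)z=-|A+B|r$ real and negative with $|A+B|r<2$, whence $|P_f(z)| = (2-|A+B|r)/(A-B)$, attaining \eqref{G}. I do not expect a genuine obstacle here: the only point demanding care is keeping track of the two uses of Schwarz's lemma, which fortunately both push the estimate in the required direction.
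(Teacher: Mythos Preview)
Your proof is correct and follows essentially the same route as the paper: introduce the Schwarz function from the subordination \eqref{F}, derive the identical formula $P_f(z)=\dfrac{z}{\omega(z)}\cdot\dfrac{2+(A+B)\omega(z)}{A-B}$, and apply Schwarz's lemma twice. Your treatment of sharpness is in fact a bit more explicit than the paper's, since you spell out the choice of $z$ (depending on the sign of $A+B$) that forces equality, whereas the paper just exhibits the extremal $f$ and its $P_f$.
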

\begin{proof}
 Since $f$ satisfies (\ref{F}) there is $h:\D \to \D$ analytic with $h(0)=0$ such that
 \[
 1+z\frac{f''}{f'}(z)=\frac{1+Ah(z)}{1+Bh(z)},
 \]
 for all $z\in \D$. So, 
\[\frac{f''}{f'}(z)=\frac{z(1+Bh(z))}{(A-B)h(z)}.
\]
It follows that
\[
P_f(z)=z\left[1+\frac{2(1+Bh(z))}{(A-B)h(z)}\right]=\frac{z}{h(z)}\frac{2+(A+B)h(z)}{A-B},
\]
then, from Schwarz's lemma
\[|P_f(z)|\geq \frac{2-|A+B||z|}{A-B},\]
for all $z\in \D$. To show that the inequality in (\ref{G}) is sharp, we consider the function 
$$f(z)= \left\{ \begin{array}{lcc} \dfrac{1}{A} \left[(1+Bz)^{A/B}-1\right], & if & A\neq 0, \, B\neq 0 \\ \\ \dfrac{1}{B}\log(1+Bz), & if & A=0 \\ \\ \dfrac{1}{A}(e^{Az}-1), & if & B=0, \end{array} \right.$$
for $z\in \D$. By direct calculation   
\[P_f(z)=\frac{2+(A+B)z}{A-B}.\]
\end{proof}

\begin{rem}
   We observe that if $A=1$ and $B=-1$ in (\ref{F}) then $f$ is convex, and by Theorem \ref{Jano} it follows $|P_f(z)|\geq 1$, for all $z\in \D$. Also, if $A=1$ and $B=-1+2\alpha$ then from (\ref{G}) we obtain (\ref{alpha1}).
\end{rem}

Similarly to what was studied for the Janowski class, we can consider the Robertson class, $S_{\alpha}$, introduced in \cite{Robert1969}. Let $f$ be a locally univalent function defined in $\D$ with $f(0)=0$ and $f'(0)=1$. We say $f\in S_{\alpha}$ if and only if
\[\text{Re}\left\{e^{i\alpha}\left(1+z\frac{f''}{f'}(z)\right)\right\}>0,
\]
for $|\alpha|<\pi/2$. In terms of subordination (see \cite{Firoz2023})
\begin{equation} \label{Rob}
 f\in S_{\alpha} \Longleftrightarrow  e^{i\alpha}\left(1+z\frac{f''}{f'}(z)\right) \prec \frac{ e^{i\alpha}+ e^{-i\alpha}z}{1-z}.
\end{equation}
Robertson also observed that for general values $\alpha$, $|\alpha|<\pi/2$, in $S_{\alpha}$, there are functions that are not univalent.

\begin{theo}
    Let $f\in S_{\alpha}$, $|\alpha|<\pi/2$, then 
\begin{equation}\label{Rob1}
|P_f(z)|\geq \frac{2-|1-e^{2i\alpha}||z|}{|1+e^{2i\alpha}|},   
\end{equation}
for all $z\in \D$. The inequality (\ref{Rob1}) is sharp.
\end{theo}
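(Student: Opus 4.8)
The plan is to mimic exactly the proofs of Theorem \ref{Jano} and the convex-of-order-$\alpha$ result: extract a Schwarz function from the subordination \eqref{Rob}, solve for $f''/f'$, substitute into the formula $P_f(z) = z + 2(f'/f'')(z)$, and then apply Schwarz's lemma. Concretely, from \eqref{Rob} there is an analytic $h:\D\to\D$ with $h(0)=0$ such that
\[
e^{i\alpha}\left(1+z\frac{f''}{f'}(z)\right) = \frac{e^{i\alpha} + e^{-i\alpha}h(z)}{1-h(z)}.
\]
Solving this for $z\frac{f''}{f'}(z)$ gives, after multiplying through by $e^{-i\alpha}$ and simplifying,
\[
z\frac{f''}{f'}(z) = \frac{(e^{-i\alpha} - e^{i\alpha})\,h(z) + (1 - 1)\cdot(\text{...})}{e^{i\alpha}(1-h(z))} \;=\; \frac{(e^{-2i\alpha}-1)h(z) + (1-h(z)) - (1-h(z))}{1-h(z)},
\]
so the cleanest route is to first write $1 + z\frac{f''}{f'}(z) = \dfrac{1 + e^{-2i\alpha}h(z)}{1-h(z)}$, from which $z\frac{f''}{f'}(z) = \dfrac{(1+e^{-2i\alpha})h(z)}{1-h(z)}$ and hence $\dfrac{f''}{f'}(z) = \dfrac{z(1-h(z))}{(1+e^{-2i\alpha})h(z)}$.

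Next I would substitute into $P_f$:
\[
P_f(z) = z + 2\frac{f'}{f''}(z) = z\left[1 + \frac{2(1-h(z))}{(1+e^{-2i\alpha})h(z)}\right] = \frac{z}{h(z)}\cdot\frac{(1+e^{-2i\alpha})h(z) + 2(1-h(z))}{1+e^{-2i\alpha}} = \frac{z}{h(z)}\cdot\frac{2 + (e^{-2i\alpha}-1)h(z)}{1+e^{-2i\alpha}}.
\]
Since $|h(z)| \le |z| < 1$ by Schwarz's lemma and $|z/h(z)| \ge 1$, the triangle inequality gives
\[
|P_f(z)| \ge \frac{|2 + (e^{-2i\alpha}-1)h(z)|}{|1+e^{-2i\alpha}|} \ge \frac{2 - |e^{-2i\alpha}-1|\,|z|}{|1+e^{-2i\alpha}|} = \frac{2 - |1-e^{2i\alpha}|\,|z|}{|1+e^{2i\alpha}|},
\]
using $|e^{-2i\alpha}-1| = |1-e^{2i\alpha}|$ and $|1+e^{-2i\alpha}| = |1+e^{2i\alpha}|$. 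This is precisely \eqref{Rob1}. For sharpness, I would take the extremal function $f$ corresponding to $h(z) = z$ (the analogue of the extremal in Theorem \ref{Jano}), namely the $f$ with $f(0)=f'(0)-1=0$ satisfying $1 + z\frac{f''}{f'}(z) = \dfrac{1+e^{-2i\alpha}z}{1-z}$; for this choice $P_f(z) = \dfrac{2 + (e^{-2i\alpha}-1)z}{1+e^{-2i\alpha}}$, and evaluating along the ray where $(e^{-2i\alpha}-1)z$ has argument opposite to $2$ (i.e. choosing the appropriate unimodular rotation of $z\to 1^-$) makes the triangle inequality an equality, so the bound is attained in the limit.

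The only genuinely delicate point is the algebraic reduction of the subordination condition to the clean form $1 + z\frac{f''}{f'}(z) = \dfrac{1+e^{-2i\alpha}h(z)}{1-h(z)}$: one must verify that the Möbius image of $\D$ under $w\mapsto \dfrac{1+e^{-2i\alpha}w}{1-w}$ coincides with the half-plane $\operatorname{Re}\{e^{i\alpha}w\} > 0$ appearing in the definition of $S_\alpha$ — equivalently, that $e^{i\alpha}\dfrac{e^{i\alpha}+e^{-i\alpha}w}{1-w}$ and $\dfrac{1+e^{-2i\alpha}w}{1-w}$ differ by the factor $e^{i\alpha}$ and the latter maps $\D$ onto $\{\operatorname{Re} > 0\}$ rotated appropriately. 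This is a routine check (the right-hand side of \eqref{Rob} divided by $e^{i\alpha}$ is exactly $\dfrac{1+e^{-2i\alpha}w}{1-w}$, as can be seen by clearing denominators), but it is where the parameter bookkeeping must be done carefully; everything after that is the same template already used twice in the paper.
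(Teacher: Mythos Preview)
Your approach is essentially identical to the paper's: extract the Schwarz function from \eqref{Rob}, compute $P_f(z)=\dfrac{z}{h(z)}\cdot\dfrac{2+(e^{-2i\alpha}-1)h(z)}{1+e^{-2i\alpha}}$, and apply Schwarz's lemma with the triangle inequality; the paper's extremal is $f(z)=\frac{1}{\lambda}\bigl[(1-z)^{-\lambda}-1\bigr]$ with $\lambda=e^{2i\alpha}$, which corresponds to $h(z)=z$ just as yours does (your version has $\lambda=e^{-2i\alpha}$, a harmless conjugate). Two small corrections: the line ``$\dfrac{f''}{f'}(z) = \dfrac{z(1-h(z))}{(1+e^{-2i\alpha})h(z)}$'' has $f''/f'$ and $f'/f''$ swapped (your next line uses the correct expression, so this is only a typo); and sharpness is not merely ``in the limit'' --- for the extremal $h(z)=z$, equality in \eqref{Rob1} holds \emph{exactly} for every $z$ on the ray where $(e^{-2i\alpha}-1)z$ is a negative real number.
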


\begin{proof}
From (\ref{Rob}) there is $w:\D \to \D$ analytic with $w(0)=0$ such that
\[e^{i\alpha}\left(1+z\frac{f''}{f'}(z)\right) =\frac{ e^{i\alpha}+ e^{-i\alpha}w(z)}{1-w(z)}.\]
Thus
\[\frac{f''}{f'}(z)=\frac{1}{z}\left[\frac{(1+e^{-2i\alpha})w(z)}{1-w(z)}\right],\]
which implies
\begin{equation} \label{Rob2}
   P_f(z)=\frac{z}{w(z)}\left[\frac{2-(1-e^{-2i\alpha})w(z)}{1+e^{-2i\alpha}}\right]. 
\end{equation}
Inequality (\ref{Rob1}) follows from (\ref{Rob2}) and Schwarz's lemma. Now, we consider the function
\[f(z)=\frac{1}{\lambda}\left[(1-z)^{-\lambda}-1\right],\]
where $\lambda=e^{2i\alpha}$ and $|\alpha|<\pi/2.$ So $f\in S_{\alpha}$ and
\[\frac{f''}{f'}(z)=\frac{\lambda +1}{1-z} \Rightarrow P_f(z)= \frac{1}{\lambda+1}(2-(1-\lambda)z)=\frac{2-(1-e^{2i\alpha})z}{1+e^{2i\alpha}},\]
show that (\ref{Rob1}) is sharp.
\end{proof}

\begin{rem}
We note that $S_0$ is the class of convex functions and from (\ref{Rob1}), if $f\in S_0$, we obtain $|P_f(z)|\geq 1$, for all $z\in \D$.
\end{rem}

\begin{prop}\label{Starlike}
 Let $f:\D\to\C$ be a normalized starlike mapping. Then 
 \begin{equation}\label{Prstarlike}
  |P_f(z)-z|\geq \dfrac{1-|z|^2}{2+|z|},   
 \end{equation}
 for all $z\in\D.$ The equality occurs for some $z\in\D$ if and only if $f$ is a suitable rotation of the Koebe function.    
\end{prop}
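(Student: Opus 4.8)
The plan is to derive an explicit formula for $P_f(z)-z$ in terms of a Schwarz function attached to $f$ and then reduce \eqref{Prstarlike} to the Schwarz lemma. Fix $z\in\D$; if $f''(z)=0$ then $P_f(z)=\infty$ and there is nothing to prove, so assume $f''(z)\neq 0$, and for the moment assume also $z\neq 0$ (the point $z=0$ is handled separately below). Since $f$ is normalized and starlike, $p(z)=zf'(z)/f(z)$ has positive real part with $p(0)=1$, so we may write $p=(1+\omega)/(1-\omega)$ for some Schwarz function $\omega:\D\to\D$, $\omega(0)=0$; if $\omega\equiv 0$ then $f(z)=z$ and $P_f\equiv\infty$, so assume $\omega\not\equiv 0$. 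Logarithmically differentiating $zf'=pf$ gives the identity $f''/f'=(p-1)/z+p'/p$; substituting $p-1=2\omega/(1-\omega)$ and $p'/p=2\omega'/(1-\omega^2)$ and clearing denominators yields
\[
\frac{f''}{f'}(z)=\frac{2\bigl(z\omega'(z)+\omega(z)+\omega(z)^2\bigr)}{z\bigl(1-\omega(z)^2\bigr)},\qquad\text{hence}\qquad P_f(z)-z=\frac{2f'}{f''}(z)=\frac{z\bigl(1-\omega(z)^2\bigr)}{z\omega'(z)+\omega(z)+\omega(z)^2}.
\]

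Next I would bound the modulus of the right-hand side from below: in the numerator use $|1-\omega^2|\ge 1-|\omega|^2$, and in the denominator apply the triangle inequality and then the Schwarz--Pick estimate $|\omega'(z)|\le(1-|\omega(z)|^2)/(1-|z|^2)$. Writing $r=|z|$ and $s=|\omega(z)|$ and cancelling the common factor $1+s$, this reduces the bound to
\[
|P_f(z)-z|\ \ge\ \frac{r(1-s^2)(1-r^2)}{r(1-s^2)+s(1+s)(1-r^2)}\ =\ \frac{r(1-s)(1-r)(1+r)}{r(1-s)+s(1-r)(1+r)}.
\]
It remains to check that the last expression is at least $(1-r^2)/(2+r)$; clearing the positive factor $(1-r)(1+r)$ and simplifying, this is equivalent to $r(1-s)\ge s(1-r)$, i.e. $s\le r$, which is exactly the Schwarz lemma for $\omega$. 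This proves \eqref{Prstarlike} for $z\neq 0$, while for $z=0$ one has $|P_f(0)|=1/|a_2|\ge 1/2$ since $|a_2|\le 2$ ($a_2$ being the coefficient of $z$ in the Carathéodory function $p$).

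Finally, for the equality statement I would run the estimates backwards. Equality at a point $z_0\neq 0$ forces $s=r$ there, whence, by the rigidity in the Schwarz lemma, $\omega(z)\equiv\lambda z$ on $\D$ with $|\lambda|=1$; feeding this back, equality in the numerator estimate and in the triangle inequality for the denominator additionally forces $\lambda z_0$ to be a nonnegative real number, while the Schwarz--Pick step is then automatically an equality. Integrating $zf'/f=(1+\lambda z)/(1-\lambda z)$ gives $f(z)=z/(1-\lambda z)^2$, a rotation of the Koebe function; conversely, for this $f$ a direct computation gives $P_f(z)-z=\lambda^{-1}\bigl(1-(\lambda z)^2\bigr)/(2+\lambda z)$, so equality in \eqref{Prstarlike} holds precisely at the points $z$ with $\lambda z\in[0,1)$. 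The case $z_0=0$ is the equality case $|a_2|=2$ of the Carathéodory coefficient bound, which again forces $f$ to be a Koebe rotation. I expect the only delicate part to be this last paragraph — keeping track of which inequalities must be simultaneously tight and verifying that they single out exactly the Koebe rotations; the inequality itself is a two-line reduction to the Schwarz lemma.
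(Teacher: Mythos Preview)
Your proof is correct and follows essentially the same route as the paper: write $zf'/f=(1+\omega)/(1-\omega)$ for a Schwarz function $\omega$, compute $P_f(z)-z$ explicitly, and reduce the inequality to the Schwarz and Schwarz--Pick lemmas. The only cosmetic difference is that the paper factors $\omega(z)=z\,\omega_0(z)$ from the start (so the stray $z$ cancels and the case $z=0$ needs no separate treatment), and for the equality case the paper singles out the Schwarz--Pick rigidity step rather than the Schwarz-lemma step---both force $\omega(z)=e^{i\theta}z$, and your analysis of which $z_0$ actually attain equality is in fact more complete than the paper's.
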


\begin{proof}
 Since $f$ is a starlike and normalized function, we have
\begin{equation*}
\text{Re} \left\{z\frac{f'}{f}(z)\right\}>0,
\end{equation*}
for all $z\in \D$. So, there exists $w:\D \to \D$ an analytic function with $w(0)=0$ such that 
\begin{equation} \label{Star}
z\frac{f'}{f}(z)=\frac{1+w(z)}{1-w(z)},
\end{equation}
for all $z\in \D$. From (\ref{Star}) it follows that
\begin{equation*}
  \frac{f''}{f'}(z)=2\frac{zw'(z)+w(z)+w^2(z)}{z(1-w^2(z))} \Rightarrow P_f(z)-z=\frac{1-w^2(z)}{w'(z)+w_0(z)(1+w(z))},
\end{equation*}
where $w(z)=zw_0(z)$ with $w_0:\D \to \D$ analytic. Therefore, using the Schwarz Pick's lemma
\begin{align*}
    |P_f(z)-z|&\geq \frac{1-|w^2(z)|}{|w'(z)|+|w_0(z)|(1+|w(z)|)}\\
    &\geq\frac{(1-|z|^2)(1-|w(z)|^2)}{1-|w(z)|^2 +(1-|z|^2)|w_0(z)|(1+|w(z)|)}\\
    &\geq \frac{1-|z|^2}{1+|w_0(z)|(1+|w(z)|)}\geq \frac{1-|z|^2}{2+|z|}.
\end{align*}
Now, if the equality occurs in (\ref{Prstarlike}) for $z_0 \in \D$, then from the above chain of inequalities, it follows that
\[|w'(z_0)|(1-|z_0|^2)=1-|w(z_0)|^2.\]
Thus, Schwarz Pick's lemma implies that $w(z)=e^{i\theta}z$, for all $z\in \D$. Thus, from (\ref{Star}) we obtain $f(z)=\dfrac{z}{(1-e^{i\theta}z)^2}.$ 
\end{proof}

In general, the image of $P_f$, when $f$ is a starlike mapping, can be any domain of the complex plane, as shown in the following example.

\begin{example} Let $f$ be a conformal mapping defined in the unit disk onto a cross domain, given by
\begin{equation*}
  f'(z)=\frac{1}{(1-z)^4\sqrt{1+z^4}}.  
\end{equation*}
 A straightforward calculations shows that 
\begin{equation*}
  P_f(z)=\frac{1+z^4+2z^8}{z^3+3z^7}  
\end{equation*}
maps the unit disk onto $\C$.    
\end{example}

To generalize the result of the previous example, let us consider $f$ a conformal map of $\D$ onto a domain in the extended complex plane $\hat{\C}$, whose boundary consists of finitely many line segments, rays or lines. In \cite{Chuaqui2012}, it is shown that the pre-Schwarzian of $f$ has the form 
\begin{equation}\label{Poli}
   \frac{f''}{f'}(z)=\frac{2B_k(z)/B_m(z)}{1-zB_k(z)/B_m(z)}, 
\end{equation}
for some finite Blaschke products $B_k, B_m$ without common zeros. In \cite{Chuaqui2014} the authors showed that $k+1$ is equal to the number of convex
vertices, while $m$ is the number of concave vertices. It follows from (\ref{Poli}) that $P_f(z)=B_m(z)/B_k(z)$ for all $z\in \D$. Using this notation, we can prove the following theorem.

\begin{theo}
 Let $f$ be a conformal map of $\D$ onto a polygon in $\hat{\C}$.
 \begin{enumerate}[wide]
     \item [$(i)$] If $|b|>1$, then $\#\{P_f(z)=b\}=k$.
     \item [$(ii)$] If $|a|<1$, then $\#\{P_f(z)=a\}=m$.
 \end{enumerate}
 \end{theo}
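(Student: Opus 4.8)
The plan is to reduce both statements to a single application of Rouché's theorem on the unit circle, using the representation $P_f = B_m/B_k$ recorded just before the statement, where $B_k$ and $B_m$ are the finite Blaschke products from (\ref{Poli}) with no common zeros and of respective degrees $k$ and $m$. Recall that each of $B_k$, $B_m$ is holomorphic on a neighbourhood of $\overline{\D}$ and has constant modulus $1$ on $\partial\D$.

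\emph{Step 1: from $P_f(z)=c$ to a zero-counting problem.} For a finite value $c$, set $g_c = B_m - c\,B_k$, which is holomorphic on a neighbourhood of $\overline{\D}$. I would first check that the number of $z\in\D$ with $P_f(z)=c$, counted with multiplicity, equals the number of zeros of $g_c$ in $\D$, counted with multiplicity. Indeed, at a point $z_0\in\D$ with $B_k(z_0)\neq 0$ one has $P_f-c = g_c/B_k$ with $B_k$ holomorphic and nonvanishing near $z_0$, so $P_f-c$ and $g_c$ vanish to the same order at $z_0$; and at a zero $z_0\in\D$ of $B_k$ one has $g_c(z_0)=B_m(z_0)\neq 0$ because $B_k$, $B_m$ share no zeros, so $z_0$ is neither a zero of $g_c$ nor a solution of $P_f(z)=c$ (it is a pole of $P_f$). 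Hence the two multisets of solutions coincide.

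\emph{Step 2: Rouché on $\partial\D$.} On $|z|=1$ one has $|B_k(z)|=|B_m(z)|=1$; in particular $g_c$ has no zero on $\partial\D$ whenever $|c|\neq 1$, since a zero there would force $|c|=|B_m(z)/B_k(z)|=1$. If $|c|>1$, then on $|z|=1$ we have $|{-c\,B_k(z)}|=|c|>1=|B_m(z)|$, so by Rouché's theorem $g_c = -c\,B_k + B_m$ has the same number of zeros in $\D$ as $-c\,B_k$, namely $\deg B_k = k$; together with Step 1 this proves $(i)$. If $|c|<1$, then on $|z|=1$ we have $|B_m(z)|=1>|c|=|{-c\,B_k(z)}|$, so $g_c = B_m + (-c\,B_k)$ has the same number of zeros in $\D$ as $B_m$, namely $\deg B_m = m$; together with Step 1 this proves $(ii)$.

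I do not expect a serious obstacle here. The only points that merit a word of care are: that $\#\{\cdot\}$ is to be read with multiplicity, which Rouché supplies automatically; that $\deg B_k=k$ and $\deg B_m=m$, which is part of the structural description of $f''/f'$ borrowed from \cite{Chuaqui2012, Chuaqui2014}; and the strictness of the Rouché inequality on all of $\partial\D$, which holds precisely because $|c|\neq 1$ and the two Blaschke products have constant modulus one on the circle. The excluded values $|c|=1$ (and $c=\infty$) are exactly those for which $g_c$ may acquire boundary zeros or drop in degree, which is consistent with their absence from the statement.
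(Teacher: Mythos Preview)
Your argument is correct and essentially the same as the paper's: both exploit $|B_k|=|B_m|=1$ on $\partial\D$ and the argument principle. The only cosmetic difference is packaging---you apply Rouch\'e to $B_m-cB_k$, whereas the paper applies the argument principle directly to $P_f=B_m/B_k$, reading off the winding numbers $0$ (for $|b|>1$) and $m-k$ (for $|a|<1$) of $P_f(\partial\D)$ and using $\#\{P_f=\infty\}=k$.
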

 
\begin{proof}
   From Theorem A and the argument principle 
\begin{equation*}
    0 =\eta(\partial \D,b)=\eta(P_f(\partial \D),b) = \frac{1}{2\pi i}\int_{\partial \D} \frac{P'_f(z)dz}{P_f(z)-b}= \#\{P_f(z)=b\}-\#\{P_f(z)=\infty\}. 
\end{equation*}
Since $P_f(z)=B_m(z)/B_k(z)$, it follows that $\#\{P_f(z)=\infty\}=k$, and therefore $(i)$ follows from the above equation. Analogously, if $a\in \D$ 
\begin{equation*}
    m-k =\eta(P_f(\partial \D),a) = \#\{P_f(z)=a\}-\#\{P_f(z)=\infty\},
\end{equation*}
which proves $(ii)$.
\end{proof} 

\begin{rem}
Let $f$ map $\D $ onto the exterior of an $(n+2)$-gon, with the normalization $f(0) =\infty$. In \cite{Chuaqui2012} it was proven that for this mapping
\begin{equation*}
    z\frac{f''}{f'}(z)=\frac{2}{z^2B_k(z)/B_m(z)-1},
\end{equation*}
for some finite Blaschke products $B_k, B_m$ without common zeros. As before, in \cite{Chuaqui2014}, it was shown that $m$ and $k+2$ are equal to the number of concave and convex vertices of the polygon, respectively. If we observe that in this case $P_f(z)=z^3B_k(z)/B_m(z)$, for all $z\in \D$, we obtain the following result:
\end{rem}

\begin{theo}
 Let $f$ be a conformal map of $\D$ onto the exterior of a $(n+2)$-gon, with normalization $f(0) =\infty$.
 \begin{enumerate} [wide]
     \item [$(i)$] If $|b|>1$, then $\#\{P_f(z)=b\}=m$.
     \item [$(ii)$] If $|a|<1$, then $\#\{P_f(z)=a\}=k+3$.
 \end{enumerate}
 \end{theo}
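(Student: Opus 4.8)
The plan is to run, almost verbatim, the argument-principle computation used for the preceding theorem (the bounded-polygon case), the only new ingredient being the explicit form $P_f(z)=z^3B_k(z)/B_m(z)$ recorded in the remark just above the statement. First I would note that $z^3B_k$ is a finite Blaschke product of degree $k+3$ and $B_m$ one of degree $m$, so $P_f$ is a ratio of two finite Blaschke products. Consequently $P_f$ is holomorphic in a neighbourhood of $\overline{\D}$ except for its poles, all of which are zeros of $B_m$ and hence lie in $\D$; and on the unit circle $|P_f(z)|=|z|^3|B_k(z)|/|B_m(z)|=1$, so $P_f$ maps $\partial\D$ into $\partial\D$. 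In particular the closed curve $P_f(\partial\D)$ avoids every point $b$ with $|b|>1$ and every point $a$ with $|a|<1$, which is exactly what is required to apply the argument principle to $P_f-b$ and to $P_f-a$ on $\D$, as in the proof of the previous theorem.

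For $(i)$, fix $b$ with $|b|>1$. Since $P_f(\partial\D)\subseteq\partial\D$ and $b$ lies in the unbounded complementary component of $\partial\D$, the winding number $\eta(P_f(\partial\D),b)$ vanishes, and the argument principle gives
\[
0=\eta(P_f(\partial\D),b)=\frac{1}{2\pi i}\int_{\partial\D}\frac{P_f'(z)\,dz}{P_f(z)-b}=\#\{P_f(z)=b\}-\#\{P_f(z)=\infty\}.
\]
For $(ii)$, fix $a$ with $|a|<1$. Now $a$ lies in $\D$, a connected set disjoint from $P_f(\partial\D)$, so $\eta(P_f(\partial\D),a)=\eta(P_f(\partial\D),0)$, and this last number is the total change of $\arg P_f$ along $\partial\D$, that is, the degree of $z\mapsto z^3B_k(z)/B_m(z)$ as a self-map of the circle, which equals $3+k-m$. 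The argument principle then yields
\[
\#\{P_f(z)=a\}-\#\{P_f(z)=\infty\}=\eta(P_f(\partial\D),a)=k+3-m.
\]

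It remains to evaluate $\#\{P_f(z)=\infty\}$. Since $B_k$ and $B_m$ have no common zeros, the poles of $P_f=z^3B_k/B_m$ inside $\D$ are precisely the zeros of $B_m$, counted with multiplicity --- the only possible cancellation being of the factor $z^3$ against a zero of $B_m$ at the origin. This must be excluded, and it is here that the normalization $f(0)=\infty$ is used: it forces a simple pole of $f$ at $0$, which pins down the local behaviour of $P_f$ at $0$ and gives $B_m(0)\neq0$. Hence $\#\{P_f(z)=\infty\}=m$, and substituting into the two displayed identities gives $\#\{P_f(z)=b\}=m$ and $\#\{P_f(z)=a\}=k+3$; since an $(n+2)$-gon has $m$ concave and $k+2$ convex vertices, the exponent in $(ii)$ may equivalently be written $n+3-m$. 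Everything else is routine; the point that genuinely differs from the bounded-polygon argument, and hence the place to be careful, is exactly this bookkeeping of the zeros and poles of $P_f$ in $\D$ --- in particular verifying the non-cancellation at the origin introduced by the extra factor $z^3$.
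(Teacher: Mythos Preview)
Your proof is correct and follows the same approach the paper intends: the paper does not spell out a separate proof here but simply records $P_f(z)=z^3B_k(z)/B_m(z)$ in the preceding remark and leaves it understood that the argument-principle computation from the bounded-polygon theorem carries over verbatim, which is exactly what you do. Your treatment is in fact slightly more careful than the paper's, since you flag and resolve the one new issue---the possible cancellation of the factor $z^3$ against a zero of $B_m$ at the origin---whereas the paper passes over this in silence; your justification could be sharpened by noting that the simple pole of $f$ at $0$ forces $P_f(z)=O(z^3)$ near $0$, which is incompatible with $B_m(0)=0$, but the conclusion is right.
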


\begin{prop} Let $f:\D\to\C$ be a starlike function of order $1/2$. Then 
\begin{equation}
|P_f(z)-z|\geq 1-|z|,\qquad z\in\D.
\end{equation}
The equality occurs for some $z\in\D$ if and only if $f$ is a suitable rotation of $f(z)=z/(1-z)$.
\end{prop}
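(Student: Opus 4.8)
The plan is to follow the pattern of the proof of Proposition~\ref{Starlike}, exploiting the especially simple subordination attached to starlikeness of order $1/2$. Since $f$ is starlike of order $1/2$ we have $\mathrm{Re}\{zf'/f\}>1/2$ in $\D$, with $zf'/f$ equal to $1$ at the origin; as the map $\zeta\mapsto 1/(1-\zeta)$ takes $\D$ conformally onto the half-plane $\{\mathrm{Re}\,w>1/2\}$ and sends $0$ to $1$, the subordination principle yields an analytic $w:\D\to\D$ with $w(0)=0$ and
\[
z\frac{f'}{f}(z)=\frac{1}{1-w(z)}.
\]
Logarithmic differentiation of this identity gives $\dfrac{f''}{f'}(z)=\dfrac{w(z)+zw'(z)}{z(1-w(z))}$, and hence, from $P_f(z)=z+2(f'/f'')(z)$,
\[
P_f(z)-z=\frac{2z\,(1-w(z))}{w(z)+zw'(z)}.
\]

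Next I would write $w(z)=zw_0(z)$ with $w_0:\D\to\overline{\D}$ analytic (Schwarz's lemma), so that $w+zw'=z(2w_0+zw_0')$ and therefore
\[
|P_f(z)-z|=\frac{2|1-zw_0(z)|}{|2w_0(z)+zw_0'(z)|}\ge\frac{2\bigl(1-|z||w_0(z)|\bigr)}{2|w_0(z)|+|z||w_0'(z)|}.
\]
If $w_0$ is a unimodular constant the bound is verified by hand; otherwise $w_0(\D)\subseteq\D$ and the Schwarz--Pick inequality gives $|w_0'(z)|\le(1-|w_0(z)|^2)/(1-|z|^2)$. Writing $r=|z|$ and $s=|w_0(z)|\in[0,1]$, the estimate becomes
\[
|P_f(z)-z|\ge\frac{2(1-rs)(1-r^2)}{2s(1-r^2)+r(1-s^2)},
\]
and it remains to check that the right-hand side is at least $1-r$. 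Dividing through by $1-r>0$, this is equivalent to $rs^2-2(1+r)s+(2+r)\ge 0$; the left-hand side factors as $r(s-1)\bigl(s-1-\tfrac{2}{r}\bigr)$, which is nonnegative since $0\le s\le 1<1+\tfrac{2}{r}$. This proves the inequality.

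For the equality statement I would run the chain backwards: equality in $|P_f(z)-z|\ge 1-|z|$ at some $z_0\in\D$ forces $s=1$ in the quadratic, i.e.\ $|w_0(z_0)|=1$ at an interior point, so by the maximum modulus principle $w_0\equiv e^{i\theta}$; integrating $zf'/f=1/(1-e^{i\theta}z)$ with the normalization $f(0)=0$, $f'(0)=1$ gives $f(z)=z/(1-e^{i\theta}z)$, a rotation of $z/(1-z)$. Conversely, for this $f$ one has $w_0\equiv e^{i\theta}$, whence $P_f(z)\equiv e^{-i\theta}$ and equality holds at $z=0$. I do not expect a genuine obstacle here: the only points needing care are the legitimacy of the Schwarz--Pick step (handled by splitting off the unimodular-constant case) and spotting the factorization $r(s-1)(s-1-2/r)$ of the quadratic, after which the argument is routine.
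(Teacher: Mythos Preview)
Your proof is correct and follows essentially the same strategy as the paper's: write $zf'/f=1/(1-w)$ with $w(0)=0$, compute $P_f(z)-z=2z(1-w)/(w+zw')$, and finish with Schwarz--Pick. The only difference is that the paper applies Schwarz--Pick directly to $w$ (using $|w'|\le(1-|w|^2)/(1-|z|^2)$ together with $|w|\le|z|$) and obtains the bound through the short chain $\dfrac{2(1-|w|)}{|w/z|+|w'|}\ge\dfrac{(1-|z|^2)}{1+|w|}\ge 1-|z|$, whereas you apply Schwarz--Pick to $w_0=w/z$ and then verify the resulting quadratic $rs^2-2(1+r)s+(2+r)\ge 0$ by factoring; both routes lead to the same equality analysis.
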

\begin{proof}
    Since $f$ is starlike of order $1/2$, we have
\[\text{Re}\left\{2z\frac{f'}{f}(z)-1\right\}>0.\]
So, there is $w:\D \to \D$ with $w(0)=0$, such that
\begin{equation}\label{star1}
   2z\frac{f'}{f}(z)-1=\frac{1+w(z)}{1-w(z)}\Rightarrow \frac{f'}{f}(z)=\frac{1}{z(1-w(z))}. 
\end{equation}
By direct calculation, we obtain
\[\frac{f''}{f'}(z)=\frac{w(z)+zw'(z)}{z(1-w(z))}\Rightarrow P_f(z)-z=\frac{2z(1-w(z))}{w(z)+zw'(z)}.\]
Therefore, by Schwarz Pick's lemma
\begin{align}
 |P_f(z)-z|&\geq \frac{2(1-|w(z)|)}{\left|\dfrac{w(z)}{z}\right|+|w'(z)|}\geq \frac{2(1-|w(z)|)}{1+\dfrac{1-|w(z)|^2}{1-|z|^2}}\\
 & \geq \frac{(1-|w(z)|)(1-|z|^2)}{1-|w(z)|^2}\geq 1-|z|.
\end{align}
The proof of the last statement is analogous to the proof in Proposition \ref{Starlike}. We obtain $w=e^{i\theta}z$, for all $z\in \D$. And replacing in (\ref{star1}), it follows that $f(z)=z/(1-e^{i\theta}z).$
\end{proof}

Our next result addresses the class studied in the well known Noshiro-Warschawski theorem. 

\begin{prop} Let $f:\D\to\C$ be a normalized mapping such that {\rm Re}$\{f'(z)\}>0$. Then 
\begin{equation}\label{N}
|P_f(z)-z|\geq 1-|z|^2,\qquad z\in\D.
\end{equation}
The equality occurs for some $z\in\D$ if and only if $f$ is a suitable rotation of $f(z)=-2\log(1-z)-z$.
\end{prop}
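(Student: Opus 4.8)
The plan is to use the Noshiro--Warschawski hypothesis to represent $f'$ by a Schwarz function. Normalizing so that $f(0)=0$ and $f'(0)=1$, the condition $\mathrm{Re}\,f'(z)>0$ means that $f'$ maps $\D$ into the right half-plane with $f'(0)=1$, so there is an analytic $w:\D\to\D$ with $w(0)=0$ such that
\[
f'(z)=\frac{1+w(z)}{1-w(z)},\qquad z\in\D .
\]
Differentiating gives $f''(z)=2w'(z)/(1-w(z))^2$, whence $(f''/f')(z)=2w'(z)/(1-w(z)^2)$ and therefore
\[
P_f(z)-z=2\,\frac{f'}{f''}(z)=\frac{1-w(z)^2}{w'(z)} .
\]
(If $w'(z)=0$ then $f''(z)=0$, so $P_f(z)=\infty$ and (\ref{N}) is trivially true; so assume $w'(z)\neq0$.)

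Next I would bound the right-hand side from below. The Schwarz--Pick lemma gives $|w'(z)|\le(1-|w(z)|^2)/(1-|z|^2)$, and the triangle inequality applied to $1=(1-w(z)^2)+w(z)^2$ gives $|1-w(z)^2|\ge 1-|w(z)|^2$. Combining the two,
\[
|P_f(z)-z|=\frac{|1-w(z)^2|}{|w'(z)|}\ \ge\ \frac{\bigl(1-|w(z)|^2\bigr)\bigl(1-|z|^2\bigr)}{1-|w(z)|^2}=1-|z|^2,
\]
which is (\ref{N}).

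For the equality statement, if equality holds at some $z_0\in\D$ then both inequalities above are equalities at $z_0$. Equality in Schwarz--Pick forces $w$ to be a disk automorphism, and $w(0)=0$ then gives $w(z)=e^{i\theta}z$ for some $\theta\in\R$; integrating $f'(z)=-1+2/(1-e^{i\theta}z)$ with $f(0)=0$ yields $f(z)=-z-2e^{-i\theta}\log(1-e^{i\theta}z)=e^{-i\theta}g(e^{i\theta}z)$ with $g(z)=-2\log(1-z)-z$, i.e. $f$ is a suitable rotation of $g$. Conversely, for such an $f$ a direct computation gives $P_f(z)-z=e^{-i\theta}\bigl(1-(e^{i\theta}z)^2\bigr)$, so $|P_f(z)-z|=|1-(e^{i\theta}z)^2|=1-|z|^2$ whenever $e^{i\theta}z$ is real, and equality does occur.

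The routine parts are the two coefficient identities for $f''/f'$ and $P_f$ and the verification of the extremal function. The point that needs a little care is the equality analysis: one has to notice that equality in $|1-w^2|\ge 1-|w|^2$ only says that $w(z_0)^2\ge0$, which is not an extra restriction on $f$ (it is automatically consistent with $w(z)=e^{i\theta}z$) but merely singles out the points $z_0=re^{-i\theta}$, $-1<r<1$, at which equality can be attained. The genuine rigidity is supplied entirely by the Schwarz--Pick equality clause, and assembling these observations completes the proof.
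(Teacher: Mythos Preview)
Your proof is correct and follows essentially the same approach as the paper: represent $f'$ via a Schwarz function $w$, compute $P_f(z)-z=(1-w(z)^2)/w'(z)$, and combine the triangle inequality $|1-w^2|\ge 1-|w|^2$ with the Schwarz--Pick bound on $|w'|$; the equality case is likewise handled via the Schwarz--Pick rigidity. Your additional remark that the triangle-inequality equality condition only pins down the point $z_0$ (not the function) is a nice clarification the paper leaves implicit.
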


\begin{proof}
 Since $f'(0)=1$ and \text{Re}$\{f'(z)\}>0$, there is $w:\D \to \D$ with $w(0)=0$, such that
\begin{equation}\label{N2}
  f'(z)=\frac{1+w(z)}{1-w(z)}.  
\end{equation}
So,
\[\frac{f''}{f'}(z)=\frac{2w'(z)}{1-w^2(z)} \Rightarrow P_f(z)-z=\frac{1-w^2(z)}{w'(z)}.\]
Using Schwarz Pick's lemma
\begin{equation}\label{N1}
 |P_f(z)-z|\geq\frac{1-|w^2(z)|}{|w'(z)|}\geq 1-|z|^2.  
\end{equation}
The last statement is obtained by assuming equality at a point in (\ref{N1}) and again using Schwarz Pick's lemma to conclude that $w(z)=e^{i\theta}z$, for all $z\in \D$. So, from (\ref{N2}) we have $f(z)=-2e^{-i\theta}\log(1-e^{i\theta}z)-z.$
\end{proof}

\section{$P_f$ and Schwarzian derivative}

Let $f$ be a locally univalent mapping, the Schwarzian derivative of $f$ is given by
\[Sf=\left(\dfrac{f''}{f'}\right)'-\frac12\left(\dfrac{f''}{f'}\right)^2.\] This expression captures conformal invariance, meaning it remains invariant under Möbius transformations. In the context of complex analysis, the Schwarzian derivative is used to measure how far a function deviates from being a Möbius transformation, providing a precise quantification of this deviation. This tool has found significant applications in Teichmüller theory, where quasiconformal deformations of Riemannian structures are studied, and in the theory of differential equations, where the solutions of certain equations can be characterized by the Schwarzian.\\
Let $N$ be the set of all analytic functions defined in $\D$ satisfying
\[|Sf(z)|\leq \frac{2}{(1-|z|^2)^2}.\]

The class $N_0$ is defined as the set of functions $f$ in $N$ normalized by $f(0)=f'(0)-1=f''(0)=0.$ These Nehari classes were formally introduced and extensively studied in \cite{Chuaqui1996}. A straightforward adaptation in the proof of Lemma 1 in \cite{Chuaqui1994} can be used to show the following result:

\begin{lemma} \label{Lemma f, F}
Let $f,\ F \in N_0$, $F(x)\in \R$ for $x\in (-1,1)$, such that $|Sf(z)|\leq SF(|z|)$, then $$\left|\dfrac{f''}{f'}(z)\right|\leq \dfrac{F''}{F'}(|z|).$$    
Equality holds for any $z\neq 0$ if and only if $f=F$.
\end{lemma}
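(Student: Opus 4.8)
The plan is to imitate the proof of Lemma 1 in \cite{Chuaqui1994}, which in turn is a Gronwall-type comparison argument for the logarithmic derivative $f''/f'$. Write $p = f''/f'$ and $P = F''/F'$, so that the Schwarzian condition reads $Sf = p' - \tfrac12 p^2$ and likewise for $F$. Set $\phi(z) = p(z)$ and note $\phi(0) = 0$ because $f \in N_0$; similarly $P(0)=0$. The idea is to compare $|\phi(z)|$ along a ray $z = te^{i\theta}$, $t \in [0,1)$, with the real-valued function $P(t)$ along $(-1,1)$, using the differential inequality that the hypothesis $|Sf(z)| \le SF(|z|)$ provides.

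First I would record the ODEs: since $p' = Sf + \tfrac12 p^2$, differentiating $|p|$ along the ray $t \mapsto p(te^{i\theta})$ gives, by the triangle inequality, $\frac{d}{dt}|p(te^{i\theta})| \le |e^{i\theta}p'(te^{i\theta})| \le |Sf(te^{i\theta})| + \tfrac12 |p(te^{i\theta})|^2 \le SF(t) + \tfrac12 |p(te^{i\theta})|^2$. On the other hand, because $F(x)\in\R$ on $(-1,1)$ and $F$ maps into a domain symmetric enough that $P = F''/F'$ is real and increasing on $[0,1)$ (a property one extracts from $F \in N_0$ together with the extremal nature of $F$; this is where one invokes the structure of the Nehari comparison function), one has the exact identity $P'(t) = SF(t) + \tfrac12 P(t)^2$ with $P(0) = 0$. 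Thus $y(t) := |p(te^{i\theta})|$ and $Y(t) := P(t)$ satisfy $y' \le SF(t) + \tfrac12 y^2$, $Y' = SF(t) + \tfrac12 Y^2$, with $y(0) = Y(0) = 0$. A standard comparison principle for scalar ODEs then yields $y(t) \le Y(t)$ for all $t$, i.e. $|p(z)| \le P(|z|)$, which is the claimed inequality.

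For the equality statement, suppose $|p(z_0)/z_0|$... rather, suppose equality $|p''/p'...|$ — suppose $\bigl|\tfrac{f''}{f'}(z_0)\bigr| = \tfrac{F''}{F'}(|z_0|)$ holds at some $z_0 \neq 0$, with $z_0 = t_0 e^{i\theta}$. Then $y(t_0) = Y(t_0)$ with $y \le Y$ throughout and both starting at $0$; tracing back through the comparison forces $y' = Y'$ on $[0,t_0]$, which in turn forces equality in every triangle inequality used: $|Sf(te^{i\theta})| = SF(t)$ and the argument of $e^{i\theta}Sf(te^{i\theta})$ must align with that of $\tfrac12 p(te^{i\theta})^2$ along the ray. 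Feeding this rigidity back into the Schwarzian ODE and using uniqueness of solutions of the resulting initial value problem for $p$ along the ray, one gets $p(te^{i\theta}) = e^{-i\theta}P(t)$ on $[0,t_0]$; since $f,F \in N_0$ have Schwarzians agreeing in modulus and now in the relevant boundary behavior, the standard Nehari rigidity (as in \cite{Chuaqui1996,Chuaqui1994}) upgrades this to $f = F$ globally.

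The main obstacle is the same as in the original \cite{Chuaqui1994} argument: justifying the exact ODE $P' = SF + \tfrac12 P^2$ with $P$ real and monotone, i.e. verifying that the comparison function $F$ genuinely behaves like the one-dimensional extremal, so that the scalar comparison principle applies with $Y = P$ rather than with some majorant one must separately control. One must also be careful that $P(t)$ does not blow up before $t = |z_0|$, so that the comparison is valid on the whole segment — this uses that $F \in N_0 \subset N$ and hence $F$ has the expected behavior on $[0,1)$. Once these structural facts about $F$ are in hand, the rest is the routine Gronwall/Riccati comparison plus the standard rigidity bookkeeping, exactly as the phrase ``a straightforward adaptation'' promises.
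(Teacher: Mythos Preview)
The paper does not actually give a proof of this lemma: it simply states that ``a straightforward adaptation in the proof of Lemma 1 in \cite{Chuaqui1994}'' yields the result. Your proposal is precisely such an adaptation---the Riccati/Gronwall comparison of $|f''/f'|$ against $F''/F'$ along rays, using $p'=Sf+\tfrac12 p^2$ and the corresponding exact equation for $P=F''/F'$---so your approach coincides with what the paper invokes. One small caution on the rigidity clause: your argument along the ray $z=te^{i\theta}$ naturally produces $p(te^{i\theta})=e^{-i\theta}P(t)$ and $Sf(te^{i\theta})=e^{-2i\theta}SF(t)$, which a priori pins down $f$ only as a rotation $e^{-i\theta}F(e^{i\theta}\,\cdot\,)$ rather than $F$ itself; the identification $f=F$ then needs the extra input that the rotated function still satisfies the global hypothesis $|Sf(z)|\le SF(|z|)$ only when $\theta=0$ (or else the equality statement in the lemma should be read up to rotation). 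This is a point about the statement rather than your method, which is sound and matches the cited source.
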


We do not know of a result in which a bound on the Schwarzian norm of $f$ implies its convexity. In this direction, we prove the following theorem.

\begin{theo}\label{Teoconv}
Let $f$ be a locally univalent analytic mapping defined in $\D$, such that $f''(0)=0$.
\begin{enumerate}[wide]
    \item [$(i)$] If $|Sf(z)|\leq 2a^2$, where $a=0.653\ldots$ is the first positive solution of $2a\tan(a)=1$, then $f$ is a convex mapping.
  \item [$(ii)$] If $|Sf(z)|\leq n|z|^{n-1}-\dfrac{1}{2}|z|^{2n}$ for some $n\in \mathbb{N}$, then $f$ is a convex mapping. 
\end{enumerate}
\end{theo}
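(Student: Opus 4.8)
The plan is to derive convexity from the classical criterion that $f$ is convex if and only if $\text{Re}\{1+z(f''/f')(z)\}>0$ on $\D$, controlling $(f''/f')(z)$ by comparison with an explicit real function $F\in N_0$ through Lemma~\ref{Lemma f, F}. Since $f''/f'$, $Sf$ and convexity are all invariant under $f\mapsto af+b$, I would first normalize by $f(0)=0$, $f'(0)=1$; with the standing hypothesis $f''(0)=0$ this places $f$ in $N_0$ as soon as $f\in N$. In case $(i)$ that is immediate, since $|Sf(z)|\le 2a^2<2\le 2/(1-|z|^2)^2$ because $a<1$. In case $(ii)$ it follows from $|Sf(z)|\le n|z|^{n-1}$ together with the elementary bound $n\,r^{n-1}(1-r^2)^2\le 16n/(n+3)^2\le 2$ on $[0,1]$ (the inner maximum is at $r^2=(n-1)/(n+3)$, and $8n\le(n+3)^2$ since $(n-1)^2+8\ge0$).

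For part $(i)$ I would take $F(x)=a^{-1}\tan(ax)$: then $F(0)=F''(0)=0$, $F'(0)=1$, $(F''/F')(x)=2a\tan(ax)$, and a one-line computation gives $SF\equiv 2a^2$, so $F\in N_0$ by the same bound $2a^2<2$. Lemma~\ref{Lemma f, F} then yields $|(f''/f')(z)|\le 2a\tan(a|z|)$, hence $\text{Re}\{1+z(f''/f')(z)\}\ge 1-2|z|\,a\tan(a|z|)$. Since $r\mapsto 2ra\tan(ar)$ is strictly increasing on $[0,1]$ (note $a<\pi/2$) with boundary value $2a\tan a=1$, the right-hand side stays positive on $\D$, so $f$ is convex; the transcendental equation $2a\tan a=1$ is precisely what makes the bound sharp at $|z|=1$. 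For part $(ii)$ the corresponding comparison function is $F(z)=\int_0^z \exp\!\big(w^{n+1}/(n+1)\big)\,dw$, for which $(F''/F')(z)=z^n$ and hence $SF(z)=nz^{n-1}-\tfrac12 z^{2n}$, matching the hypothesis exactly, with $F(0)=F''(0)=0$ and $F'(0)=1$; that $F\in N_0$ follows from $|SF(z)|\le(n+\tfrac12)|z|^{n-1}$ and $(n+\tfrac12)\,r^{n-1}(1-r^2)^2\le 16(n+\tfrac12)/(n+3)^2\le 2$ (now using $(n-1)^2+4\ge0$). Lemma~\ref{Lemma f, F} gives $|(f''/f')(z)|\le |z|^n$, whence $\text{Re}\{1+z(f''/f')(z)\}\ge 1-|z|^{n+1}>0$ on $\D$, and $f$ is convex.

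The only step that is not a routine ODE computation (identifying $F$ from its pre-Schwarzian, hence its Schwarzian) or a one-variable monotonicity check is verifying that each comparison function lies in the Nehari class $N$, since this is what licenses Lemma~\ref{Lemma f, F}; for $(i)$ this is trivial, and for $(ii)$ it reduces to the maximization of $r^{n-1}(1-r^2)^2$ indicated above, which ultimately rests on $(n-1)^2+4\ge0$. Alternatively, once the pointwise estimate on $(f''/f')(z)$ is in hand one may finish through Pommerenke's operator: it gives $|A_f(z)|\le\tfrac12(1-|z|^2)|(f''/f')(z)|+|z|\le1$ for all $z\in\D$ (using $(1+r)a\tan(ar)\le1$, resp.\ $(1+r)r^n<2$), hence $\alpha_f=1$, which is again convexity.
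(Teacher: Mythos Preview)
Your proof is correct and rests on the same core device as the paper's: Lemma~\ref{Lemma f, F} applied with the comparison functions $F(z)=a^{-1}\tan(az)$ in~$(i)$ and $F(z)=\int_0^z\exp(w^{n+1}/(n+1))\,dw$ in~$(ii)$, yielding $|f''/f'(z)|\le 2a\tan(a|z|)$ and $|f''/f'(z)|\le|z|^n$ respectively. The difference is only in how convexity is then extracted. The paper stays within its own framework and shows $|P_f(z)|\ge 2/|f''/f'(z)|-|z|\ge 1$, which in case~$(i)$ reduces to $(1+|z|)\,a\tan(a|z|)\le 1$; you instead invoke the classical criterion $\text{Re}\{1+z f''/f'\}\ge 1-|z|\,|f''/f'(z)|>0$, reducing to $2|z|\,a\tan(a|z|)<1$. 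Both inequalities are monotone in $|z|$ and coincide at the boundary value $2a\tan a=1$, so the transcendental constraint on $a$ is identical, though your route leaves slightly more slack for $|z|<1$. Your write-up is more careful than the paper's in two respects: you explicitly name the comparison functions $F$ (the paper leaves them implicit), and you verify that both $f$ and $F$ actually lie in $N_0$, which Lemma~\ref{Lemma f, F} requires but the paper takes for granted. Your alternative closing via $|A_f|\le 1$ is also valid and perhaps closest in spirit to the paper's $|P_f|\ge 1$ conclusion, given Lemma~\ref{lema P_f}.
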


\begin{proof} We observe that
\begin{equation}\label{convex1}
 |P_f(z)| =\left|z+ \frac{2}{\dfrac{f''}{f'}(z)}\right| \geq \left|\frac{2}{\dfrac{f''}{f'}(z)}\right|-|z|.   
\end{equation}
Using the above lemma
\[ \left| \frac{f''}{f'}(z) \right| \leq 2a \tan(a|z|).\]
Therefore, from (\ref{convex1})
\[|P_f(z)| \geq \frac{1}{a \tan(a|z|)} - |z| \geq 1\Leftrightarrow (1+|z|) a \tan(a|z|) \leq 1.\]
Since $(1+|z|) a \tan(a|z|)$ is an increasing function of $|z|$, then in the first positive solution of $2a \tan(a) \leq 1$ we have
$|P_f(z)| \geq 1$ for all $z \in \mathbb{D}$, which implies that $f$ is convex and proves $(i)$. To show $(ii)$, we observe that if $S_F(z)=nz^{n-1}-\frac{1}{2}z^{2n}$ for some $n\in \mathbb{N}$, it follows from Lemma \ref{Lemma f, F} 
\[ \left| \frac{f''}{f'}(z) \right| \leq |z|^n.\]
So, from (\ref{convex1}) 
\[|P_f(z)|\geq \frac{2}{|z|^n}-|z|\geq 1,\]
for all $z\in \D$.
\end{proof}

\begin{rem}
It is important to note that the condition $f''(0)=0$ in the Theorem \ref{Teoconv} part $(i)$ cannot be omitted. For example, let $f(z)=e^{bz}$ with $b=2a$ and $z\in\D$. In this case $f''/f'(z)=2a$, then $f$ is not convex and $|S_f(z)|=2a^2$ for all $z\in \D$.
\end{rem}

\begin{coro}
  Let $f$ be a locally univalent analytic mapping defined in $\D$, such that $f''(0)=0$. If $|Sf(z)|\leq (m+1/2)|z|^{m}$ for some $m\in \mathbb{N}$, then $f$ is a convex mapping.
\end{coro}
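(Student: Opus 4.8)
The plan is to deduce this corollary directly from Theorem~\ref{Teoconv}$(ii)$ by showing that the hypothesis $|Sf(z)| \leq (m+1/2)|z|^{m}$ implies the hypothesis of part $(ii)$ for a suitable choice of $n$. The natural guess is $n = m+1$, since then the leading term $n|z|^{n-1} = (m+1)|z|^{m}$ is comparable to $(m+1/2)|z|^{m}$. So the entire content of the proof is the elementary pointwise inequality
\[
\left(m+\tfrac12\right)|z|^{m} \;\leq\; (m+1)|z|^{m} - \tfrac12 |z|^{2m+2}, \qquad z\in\D,
\]
which, after subtracting $(m+1/2)|z|^m$ from both sides, is equivalent to $\tfrac12|z|^{2m+2} \leq \tfrac12|z|^{m}$, i.e. $|z|^{2m+2}\leq|z|^{m}$. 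Since $2m+2 > m$ for every $m\in\mathbb{N}$ and $|z|<1$, this holds, with equality only at $z=0$.

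First I would record the reduction: set $n = m+1$, note $n\in\mathbb{N}$, and observe that the target of Theorem~\ref{Teoconv}$(ii)$ reads $|Sf(z)|\leq n|z|^{n-1} - \tfrac12|z|^{2n} = (m+1)|z|^{m} - \tfrac12|z|^{2m+2}$. Then I would verify the chain
\[
|Sf(z)| \;\leq\; \left(m+\tfrac12\right)|z|^{m} \;=\; (m+1)|z|^{m} - \tfrac12|z|^{m} \;\leq\; (m+1)|z|^{m} - \tfrac12|z|^{2m+2},
\]
where the last step uses $|z|^{2m+2}\leq|z|^{m}$ for $|z|<1$. Having matched the hypothesis of Theorem~\ref{Teoconv}$(ii)$, convexity of $f$ follows immediately from that theorem, the standing assumption $f''(0)=0$ being common to both statements.

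There is essentially no obstacle here: the only thing to be careful about is the convention for $\mathbb{N}$. If $m=0$ were allowed, the hypothesis would be $|Sf(z)|\leq 1/2$, and one would still be comparing against $n=1$, giving $|Sf(z)|\leq |z|^0\cdot 1 - \tfrac12|z|^2 = 1 - \tfrac12|z|^2$; since $1/2 \leq 1 - \tfrac12|z|^2$ for $|z|\leq 1$, the argument goes through unchanged, so the statement is in fact robust to this ambiguity. I would simply present the one-line inequality, invoke Theorem~\ref{Teoconv}$(ii)$ with $n=m+1$, and conclude.
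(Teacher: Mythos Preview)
Your proof is correct and follows essentially the same route as the paper: both set $n=m+1$ and reduce to Theorem~\ref{Teoconv}$(ii)$ via the elementary pointwise inequality $(m+\tfrac12)|z|^{m}\leq (m+1)|z|^{m}-\tfrac12|z|^{2(m+1)}$ for $|z|<1$. The paper verifies this by factoring $n x^{n-1}-\tfrac12 x^{2n}=x^{n-1}(n-\tfrac12 x^{n+1})\geq(n-\tfrac12)x^{n-1}$, while you rearrange to $|z|^{2m+2}\leq|z|^{m}$; these are the same trivial observation.
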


\begin{proof}
We observe that if $x\in[0,1)$ 
\[nx^{n-1}-\frac{1}{2}x^{2n}=x^{n-1}(n-\frac{1}{2}x^{n+1})\geq (n-\frac{1}{2})x^{n-1}.\]
So, with $m=n-1$ we have
\[|Sf(z)|\leq (m+1/2)|z|^{m}\leq (m+1)|z|^m-\frac{1}{2}|z|^{2(m+1)},\]
for all $z\in \D$. It follows from Theorem \ref{Teoconv}, part $(ii)$, that $f$ is convex.
\end{proof}

\begin{prop}
  Let $f$ be a locally univalent analytic mapping defined in $\D$, such that $(1-|z|^2)^2|Sf(z)|\leq 2t$, $0<t\leq 1$, with $f''(0)=0$, then $P_f(z)$ lies outside the unit disk when $|z|\geq 1/(1+t)$. 
\end{prop}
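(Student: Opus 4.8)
The plan is to convert the Schwarzian hypothesis into a pointwise modulus bound for $f''/f'$ by a Nehari comparison, and then read off the location of the pole from the formula $P_f(z)=z+2(f'/f'')(z)$. First I would normalize. Since $P_f$, the quotient $f''/f'$, and $Sf$ are all invariant under $f\mapsto T\circ f$ with $T(z)=az+b$ (Proposition~\ref{Propie}$(i)$), and $f''(0)=0$ is given, I may assume $f(0)=0$, $f'(0)=1$, $f''(0)=0$. Because $t\le 1$, the hypothesis gives $|Sf(z)|\le 2t/(1-|z|^2)^2\le 2/(1-|z|^2)^2$, so $f\in N_0$ and Lemma~\ref{Lemma f, F} becomes available.

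For the comparison function I would take $F\in N_0$ determined by $F'(z)=(1-z^2)^{-t}$ and $F(0)=0$; it is real on $(-1,1)$, satisfies $F''(0)=0$, and a direct computation gives $F''/F'(x)=2tx/(1-x^2)$ together with
\[
SF(x)=\frac{2t\,[\,1+(1-t)x^2\,]}{(1-x^2)^2}\ \ge\ \frac{2t}{(1-x^2)^2}\ \ge\ |Sf(z)|,\qquad |z|=x.
\]
Thus $|Sf(z)|\le SF(|z|)$ and Lemma~\ref{Lemma f, F} yields
\[
\left|\frac{f''}{f'}(z)\right|\ \le\ \frac{F''}{F'}(|z|)\ =\ \frac{2t|z|}{1-|z|^2},\qquad z\in\D.
\]

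With this bound I would estimate the pole directly. Using $|f'/f''|=1/|f''/f'|\ge (1-|z|^2)/(2t|z|)$ and the reverse triangle inequality applied to $P_f(z)=z+2(f'/f'')(z)$,
\[
|P_f(z)|\ \ge\ 2\left|\frac{f'}{f''}(z)\right|-|z|\ \ge\ \frac{1-|z|^2}{t|z|}-|z|\ =\ \frac{1-(1+t)|z|^2}{t|z|}.
\]
The right-hand side is at least $1$ exactly when $(1+t)|z|^2+t|z|-1\le 0$, and since $t^2+4(1+t)=(t+2)^2$ the unique positive root of $(1+t)r^2+tr-1$ is $r=1/(1+t)$. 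Hence the estimate forces $|P_f(z)|\ge 1$, i.e.\ the pole to lie outside $\D$, throughout the disk bounded by the critical radius $1/(1+t)$; equivalently, it pins $1/(1+t)$ as the exact radius at which this guarantee turns over, so that the bound stops forcing $P_f(z)\notin\D$ once $|z|$ crosses $1/(1+t)$.

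The step I expect to be the main obstacle is the middle one: Lemma~\ref{Lemma f, F} applies only to a genuine comparison function $F\in N_0$, real on $(-1,1)$, whose Schwarzian dominates that of $f$, so the work lies in exhibiting $F$ explicitly and verifying both its membership in the Nehari class and the domination $SF(|z|)\ge|Sf(z)|$. The choice $F'=(1-z^2)^{-t}$ is what makes this clean: the membership reduces to $t[1+(1-t)x^2]\le 1$, which (being increasing in $x$) follows from its endpoint case $x=1$, namely $(t-1)^2\ge 0$, while the domination is immediate from $(1-t)x^2\ge 0$; and the same $F$ supplies the simple bound $2t|z|/(1-|z|^2)$ that produces the threshold $1/(1+t)$ in the final quadratic. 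Once the modulus bound on $f''/f'$ is secured, the remaining estimate is the elementary quadratic inequality above.
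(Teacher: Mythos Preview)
Your approach coincides with the paper's: invoke Lemma~\ref{Lemma f, F} with a comparison function whose pre-Schwarzian is $2tx/(1-x^2)$ to obtain $|f''/f'(z)|\le 2t|z|/(1-|z|^2)$, then apply the reverse triangle inequality to $P_f(z)=z+2f'/f''(z)$ and get $|P_f(z)|\ge (1-|z|^2)/(t|z|)-|z|$. You are more explicit than the paper in exhibiting $F'(z)=(1-z^2)^{-t}$ and checking both $F\in N_0$ and the domination $SF(|z|)\ge |Sf(z)|$; the paper simply writes ``analogously, as before'' and states the resulting bound.

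The one substantive discrepancy is the direction of the final inequality. You (correctly) deduce $|P_f(z)|\ge 1$ for $|z|\le 1/(1+t)$: the function $r\mapsto (1-r^2)/(tr)-r$ is strictly decreasing on $(0,1)$, equals $1$ at $r=1/(1+t)$, and tends to $-1$ as $r\to 1^-$. The paper writes the opposite condition $|z|\ge 1/(1+t)$, both in the proposition and in the last sentence of its proof, but the lower bound $(1-|z|^2)/(t|z|)-|z|$ cannot give $|P_f(z)|\ge 1$ on that range. So your computation is right and your write-up establishes what the argument actually yields; it does not, however, prove the proposition as stated, and you should flag that the inequality in the statement (and in the paper's concluding line) appears to be reversed.
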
    

\begin{proof} Analogously, as before, we use the Lemma \ref{Lemma f, F} to prove that \[ \left| \frac{f''}{f'}(z) \right| \leq \frac{2t|z|}{1-|z|^2}, \] for all $z\in \D$. It follows that
\begin{align} \label{D}
|P_f(z)|\geq \left|\frac{2}{\dfrac{f''}{f'}(z)}\right|-|z|\geq \frac{1-|z|^2}{t|z|}-|z|. 
\end{align}
It is obvious that the expression on the right side of (\ref{D}) is greater than or equal to $1$ if $|z|\geq 1/(1+t)$.
\end{proof}

\begin{prop}
  Let $f\in N_0$ be such that $|P_f(z)|\leq k$, $k>1$, then $|z|\geq \dfrac{2}{\sqrt{k^2+8}+k}.$
\end{prop}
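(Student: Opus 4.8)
The plan is to start from the definition of the pole, $P_f(z) = z + 2/(f''/f')(z)$, and convert the hypothesis $|P_f(z)| \le k$ into a lower bound on $|(f''/f')(z)|$. Indeed, from the triangle inequality,
\[
k \geq |P_f(z)| \geq \left|\frac{2}{(f''/f')(z)}\right| - |z|,
\]
so that $|(f''/f')(z)| \geq 2/(k+|z|)$ for every $z \in \D$. This gives a uniform lower bound on the pre-Schwarzian, and the point is to contrast it with the \emph{upper} bound on the pre-Schwarzian that is forced by membership in the Nehari class $N_0$.

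Next I would invoke Lemma \ref{Lemma f, F} with the comparison function $F$ whose Schwarzian is extremal for $N$ along the real axis. Since $f \in N_0$ means $|Sf(z)| \le 2/(1-|z|^2)^2$, the natural comparison is the function $F$ with $SF(x) = 2/(1-x^2)^2$; a standard computation (the same one underlying Theorem \ref{Teoconv}) gives $F''/F'(x) = 4x/(1-x^2)$ up to the normalization, so Lemma \ref{Lemma f, F} yields
\[
\left|\frac{f''}{f'}(z)\right| \leq \frac{4|z|}{1-|z|^2}
\]
for all $z \in \D$. (If the relevant extremal bound in the normalization used here is $2|z|/(1-|z|^2)$ rather than $4|z|/(1-|z|^2)$, as in the proof of the previous proposition with $t=1$, one simply uses that; the structure of the argument is unchanged, only the constant in the final inequality.) Combining the two bounds on $|(f''/f')(z)|$ gives
\[
\frac{2}{k+|z|} \leq \left|\frac{f''}{f'}(z)\right| \leq \frac{2|z|}{1-|z|^2},
\]
hence $1 - |z|^2 \leq |z|(k+|z|)$, i.e. $2|z|^2 + k|z| - 1 \geq 0$. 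Solving this quadratic in $|z|$ and discarding the negative root gives $|z| \geq (-k + \sqrt{k^2+8})/4 = 2/(\sqrt{k^2+8}+k)$, which is exactly the claimed bound.

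The main point requiring care is the choice of the comparison function and its normalization so that Lemma \ref{Lemma f, F} applies cleanly: one needs $F \in N_0$ with $F(x) \in \R$ on $(-1,1)$ and $|Sf(z)| \le SF(|z|)$, and then the correct value of $F''/F'$ at a real point. Given the form of the target inequality $2/(\sqrt{k^2+8}+k)$, which corresponds precisely to solving $2|z|^2+k|z|-1 \ge 0$, the intended pre-Schwarzian bound is $|(f''/f')(z)| \le 2|z|/(1-|z|^2)$ — i.e. the $t=1$ case of the preceding proposition applied with $(1-|z|^2)^2|Sf(z)| \le 2$ — so I would cite that bound directly rather than re-deriving it. The remaining steps (the triangle inequality at the start and the solution of the quadratic at the end) are elementary, so there is no serious obstacle; the only thing to watch is that the hypothesis $|P_f(z)| \le k$ holds at \emph{every} $z \in \D$, which is what lets us push $|z|$ down to the stated threshold.
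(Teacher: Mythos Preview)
Your argument is essentially identical to the paper's: both combine the $N_0$ pre-Schwarzian bound $|(f''/f')(z)|\le 2|z|/(1-|z|^2)$ with a lower bound on $|(f''/f')(z)|$ coming from $|P_f(z)|\le k$ (the paper writes the same triangle inequality as $2\le (|P_f(z)|+|z|)\,|f''/f'(z)|$), and then solve the resulting quadratic $2|z|^2+k|z|-1\ge 0$.

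One clarification: the hypothesis is \emph{pointwise}, not global. The statement means that at any single point $z$ where $|P_f(z)|\le k$, one has $|z|\ge 2/(\sqrt{k^2+8}+k)$; the paper's proof makes this explicit (``if $|P_f(z_0)|\le k$ for some $z_0\in\D$, then $|z_0|\ge\ldots$''). Your chain of inequalities already works at a single point, so your closing remark that the hypothesis must hold ``at every $z\in\D$'' is unnecessary and in fact a misreading of the statement. Also, your hedging about $2|z|/(1-|z|^2)$ versus $4|z|/(1-|z|^2)$ can be dropped: the correct bound in $N_0$ is $2|z|/(1-|z|^2)$, exactly as used in the preceding proposition with $t=1$, and this is what yields the stated constant.
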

\begin{proof}
  Since $f\in N_0$, we have
\[\left| \frac{f''}{f'}(z) \right| \leq \frac{2|z|}{1-|z|^2},\]
 for all $z\in \D$. On the other hand, 
  \[P_f(z)\frac{f''}{f'}(z)=z\frac{f''}{f'}(z)+2,\]
  which implies
  \[2\leq (|P_f(z)|+|z|)\left| \frac{f''}{f'}(z) \right|\leq \frac{2|z|}{1-|z|^2}(|P_f(z)|+|z|).\]
  From where $2|z|^2+|P_f(z)||z|-1\geq 0$. And this is true if
  \[|z|\geq \frac{\sqrt{|P_f(z)|^2+8}-|P_f(z)|}{4}=\frac{2}{\sqrt{|P_f(z)|^2+8}+|P_f(z)|}.\]
  So, if $|P_f(z_0)|\leq k$, for some $z_0\in \D$, then
\[|z_0|\geq \dfrac{2}{\sqrt{k^2+8}+k}.
  \]
\end{proof}

\end{document}